\documentclass{amsproc}
\usepackage{centernot}
\usepackage{mathtools}
\usepackage{stmaryrd}
\usepackage{tikz-cd}
\usepackage{enumitem}

\makeatletter
\newcommand{\xMapsto}[2][]{\ext@arrow 0599{\Mapstofill@}{#1}{#2}}
\def\Mapstofill@{\arrowfill@{\Mapstochar\Relbar}\Relbar\Rightarrow}
\makeatother

\usepackage[mathcal]{euscript}
\usepackage{amssymb, amsfonts, amsmath}
\usepackage{xypic}
\usepackage{amscd}
\usepackage{dutchcal}
\usepackage{amsthm}
\usepackage[margin=1.5in]{geometry}
\usepackage{xcolor}
\usepackage{comment}

\usepackage[style = alphabetic, doi = false, url=false, isbn = false]{biblatex}
\usepackage{hyperref}
\usepackage{xurl}
\hypersetup{breaklinks=true}
\renewbibmacro{in:}{}
\addbibresource{references.bib}
\definecolor{winered}{rgb}{0.5,0,0}
 \usepackage{hyperref}
 \hypersetup{citecolor = winered, linkcolor = winered, menucolor = winered, colorlinks = true}

\newtheorem{theorem}{Theorem}

\newtheorem{theorem-definition}[theorem]{Theorem-Definition}
\newtheorem{proposition}[theorem]{Proposition}

\theoremstyle{definition}
\newtheorem{definition}[theorem]{Definition}

\newtheorem{remark}[theorem]{Remark}
\newtheorem*{theorem*}{Theorem}

\numberwithin{equation}{section} \numberwithin{figure}{section}

\addtolength{\textwidth}{6pt} \addtolength{\evensidemargin}{-3pt}
\addtolength{\oddsidemargin}{-3pt}

\numberwithin{equation}{section}

\newcommand{\E}{\E_{\infty}}

\usepackage{scalerel,amssymb}

\def\msquare{\mathord{\scalerel*{\Box}{gX}}}
\newtheorem*{ack}{Acknowledgments}




\usepackage{graphicx}



\begin{document}

\title[]{An algebraic model for the free loop space}
\author{Manuel Rivera}
\address{Manuel Rivera, Department of Mathematics, Purdue University, 150 N. University Street, West Lafayette, IN 47907-2067}
\email{\href{mailto:manuelr@purdue.edu}{manuelr@purdue.edu}}
\maketitle

\begin{abstract} We describe an algebraic chain level construction that models the passage from an arbitrary topological space to its free loop space. The input of the construction is a categorical coalgebra, i.e. a curved coalgebra satisfying certain properties, and the output is a chain complex. The construction is a modified version of the coHochschild complex of a differential graded (dg) coalgebra. When applied to the chains on an arbitrary simplicial set $X$, appropriately interpreted, it yields a chain complex that is naturally quasi-isomorphic to the singular chains on the free loop space of the geometric realization of $X$. We relate this construction to a twisted tensor product model for the free loop space constructed using the adjoint action of a dg Hopf algebra model for the based loop space. 

 \end{abstract} 
 
\section{Introduction}

To any space $Y$ one can naturally associate a new space $LY$ defined as the set of continuous maps from the circle $S^1$ to $Y$ equipped with the compact-open topology. The space $LY$ is called the \textit{free loop space} of $Y$ and may be equipped with a natural $S^1$-action given by rotation of loops. In this article, we present an algebraic chain level model for the passage $Y \mapsto LY$ that does not assume any hypothesis on the underlying space $Y$. The construction requires a small amount of data and, consequently, is potentially useful for calculations and for studying how the algebraic topology of a geometric space manifests at the level of the free loop space. 

The main idea is inspired by the following picture. Suppose $X$ is a simplicial set. For the purposes of capturing the intuitive idea, the reader may assume $X$ arises from a simplicial complex equipped with a total ordering of its vertices. We consider ordered sequences $(\sigma_0, \cdots, \sigma_p)$ of simplicies in $X$ such that the last vertex of $\sigma_i$ is the first vertex of $\sigma_{i+1}$ for $i=0, \cdots, p-1$ and the last vertex of $\sigma_p$ is the first vertex of $\sigma_0$. These ordered sequences of simplices were called \textit{closed necklaces} in \cite{rivera-saneblidze}. 

We want to think of a closed necklace in $X$ as a family of free loops in $|X|$, the geometric realization of $X$, parameterized by a cube of an appropriate dimension. More precisely, adapting a classical construction of Adams described in \cite{adams}, it is possible to decompose a closed necklace $(\sigma_0, \cdots, \sigma_p)$ into a family of free loops in $|X|$ parameterized by a particular subdivision of a cube of dimension $|\sigma_0|+ \cdots +|\sigma_p|-p$ having the following properties:
\\
$(i)$ the base points of every loop in this family always lie inside the special simplex $\sigma_0$, and
\\
$(ii)$ the boundary of such family may be described in terms of all ``sub-closed necklaces" of codimension $1$.

In \cite{rivera-saneblidze}, this idea was used to construct a \textit{combinatorial} model for $L|X|$ given by gluing a set of polyhedra indexed by closed necklaces in $X$.  In the present paper, we are concerned with an \textit{algebraic} version of this construction. Namely, for an arbitrary commutative ring $\mathbf{k}$, we describe a functorial construction that produces a $\mathbf{k}$-chain complex, directly from the natural algebraic structure of the normalized $\mathbf{k}$-chains $C_*(X)$ suitable interpreted, that computes the $\mathbf{k}$-homology of $L|X|$. We highlight four essential observations that are used in this algebraic construction. 
\\
\begin{enumerate} [wide, labelwidth=!, labelindent=0pt]
\item   The first observation is that the graded $\mathbf{k}$-module freely generated by closed necklaces in $X$ may be described algebraically in terms of the \textit{cotensor product} for a bicomodule structure of the normalized chains $C_*(X)$ over the coalgebra $C_0(X)=\mathbf{k}[X_0]$ generated by the set of vertices of $X$. This bicomodule structure on $C_*(X)$ is determined by projecting a simplex to its first or last vertex. Then for any pair of simplices $\sigma_0$ and $\sigma_1$ in $X$, requiring the last vertex of $\sigma_0$ to be the first vertex of $\sigma_1$ is equivalent to requiring the tensor $\sigma_0 \otimes \sigma_1 \in C_*(X) \otimes_{\mathbf{k}} C_*(X)$ to lie inside the sub-$\mathbf{k}$-module \[C_*(X) \square_{C_0(X)} C_*(X) \subseteq C_*(X) \otimes_{\mathbf{k}} C_*(X),\] where $\square_{C_0(X)}$ denotes the cotensor product of $C_0(X)$-bicomodules. 
\\

\item The second observation is that the boundary of a closed necklace representing a family (or ``chain") of free loops may be described algebraically in terms of certain simplicial face maps and the Alexander-Whitney coproduct \[\Delta \colon C_*(X) \to C_*(X) \square_{C_0(X)} C_*(X).\] The resulting description of the boundary is reminiscent of the differential of the \textit{coHochschild complex} of a differential graded (dg) coalgebra as studied in \cite{Doi}, \cite{hesscohochschild}, and other articles. However, now we are in the different context of comonoids in the category of bicomodules over a coalgebra with cotensor product as monoidal structure. 
\\

\item The third observation is that the relevant structure of $C_*(X)$ for our purposes may be packaged as a curved coalgebra satisfying certain properties, which we call a \textit{categorical coalgebra}. This notion is inspired by Holstein and Lazarev's categorical Koszul duality theory \cite{holstein-lazarev}. Any categorical coalgebra $C$ gives rise to a comonoid in the monoidal category of $C_0$-bicomodules with cotensor product. Furthermore, any categorical coalgebra gives rise to a dg category through a many object version of the cobar construction. The main construction of the article is then a version of the coHochschild chain complex for categorical coalgebras that coincides with the classical coHochschild complex when restricted to connected dg coalgebras. It has a categorical coalgebra as input and a mixed complex, i.e. a chain complex equipped with an additional degree $+1$ operator squaring to zero, as output. The coHochschild complex is invariant with respect to a suitable notion of weak equivalence drawn from Koszul duality theory. 
\\

\item The fourth observation is that in order to recover a homological model for the free loop space $L|X|$ for an arbitrary simplicial set $X$, certain formal localization must be performed at the $1$-simplices of $X$. This step is not necessary if all the $1$-simplices in $X$ already have inverses up to homotopy (e.g. $X$ is a Kan complex). This localization may be described in purely algebraic terms. In order to extract the set of elements to be localized, we consider categorical coalgebras equipped with an additional dg coalgebra enrichment on their associated dg category. We call these $B_{\infty}$-\textit{categorical coalgebras}. We define an extended version of the coHochschild complex that takes a $B_{\infty}$-categorical coalgebra and produces a chain complex by formally inverting a particular set (extracted by applying the \textit{set-like elements functor} to the dg coalgebra enrichment of the associated cobar dg category) in the coHochschild complex of the underlying categorical coalgebra.
\\

\end{enumerate}
Our main result, informally stated, is the following.
\begin{theorem*}
For an arbitrary simplicial set $X$, the extended coHochschild complex of $C_*(X)$, a $B_{\infty}$-categorical coalgebra model for the normalized chains on $X$, is naturally quasi-isomorphic to $C^{\text{sing}}_*(L|X|)$, the singular chains on the free loop space.
\end{theorem*}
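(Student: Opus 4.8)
The plan is to prove the quasi-isomorphism by a chain of comparisons factoring through the based loop space, organized by the Koszul-dual relationship between the coHochschild complex of a coalgebra and the Hochschild complex of its cobar construction, together with the cyclic-bar (Burghelea--Fiedorowicz--Goodwillie) description of free loop space homology. The guiding principle is the sequence of equivalences: coHochschild of $C_*(X)$ is quasi-isomorphic to the Hochschild complex of the cobar dg category $\Omega C_*(X)$, which is quasi-isomorphic to the Hochschild complex of $C_*(\Omega|X|)$, which in turn computes $C^{\text{sing}}_*(L|X|)$. The localization built into the \emph{extended} coHochschild complex is precisely what makes the middle equivalence hold for arbitrary, not necessarily simply connected, $X$.

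First I would establish the based-loop-space input. By the many-object, localized cobar theorem --- the extension of the Adams and Rivera--Zeinalian results to arbitrary simplicial sets --- the cobar dg category associated to the categorical coalgebra $C_*(X)$, after formally inverting the set-like elements extracted from its $B_\infty$-enrichment, is quasi-isomorphic on each Hom-complex to the chains on the corresponding space of paths in $|X|$; in particular its loop algebra at a vertex models $C_*(\Omega|X|)$ with the correct fundamental group $\pi_1(|X|)$. The localization at the $1$-simplices is the algebraic incarnation of the homotopy invertibility of loops and is exactly what repairs the failure of the naive cobar construction in the non-simply-connected setting.

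Second I would identify the extended coHochschild complex with a cyclic-bar / twisted tensor product model for $L|X|$. On the coalgebra side the coHochschild differential is, by construction, the cyclic cobar differential on $C_*(X) \square_{C_0(X)} (\cdots)$; Koszul-dualizing turns this into the Hochschild differential of the cobar dg category twisted by the adjoint (conjugation) action. This is the twisted tensor product advertised in the abstract: the free loop fibration $\Omega|X| \to L|X| \to |X|$ has monodromy given by the adjoint action of the loop algebra on itself, so that $C^{\text{sing}}_*(L|X|) \simeq C_*(|X|) \otimes_\tau C_*(\Omega|X|)$ with $\tau$ encoding this action, and the localized coHochschild complex reproduces $\tau$ after inverting the set-like elements. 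To verify that the assembled comparison is a quasi-isomorphism I would filter both complexes by necklace length (equivalently cobar word length, or Hochschild simplicial degree), producing spectral sequences whose $E^1$-pages are built from $H_*(\Omega|X|)$ and its induced self-action; a term-by-term comparison, natural in $X$ because every map in the zig-zag is induced by a functorial construction, then yields the isomorphism on homology.

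The step I expect to be the main obstacle is the non-simply-connected case, which is the content of the fourth observation in the introduction. One must show that the formal localization at the set-like elements is homologically exact --- essentially an Ore or flatness condition guaranteeing that no spurious higher derived terms are introduced into the coHochschild complex --- and that the resulting localized complex genuinely captures the $\pi_1$-monodromy of the free loop fibration rather than collapsing to its universal cover. Establishing convergence of the necklace-length spectral sequence without a connectivity hypothesis, and matching the localized coHochschild differential precisely to the adjoint-action twisting cochain, will be the technical heart of the argument; in the simply connected case both points are automatic and the classical coHochschild-to-free-loop-space comparison is recovered.
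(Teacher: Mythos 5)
Your skeleton does match the paper's: extended coHochschild complex $\simeq$ Hochschild complex of the localized cobar dg category $\simeq$ Hochschild complex of the chains on the path category of $|X|$ $\simeq$ $C^{\text{sing}}_*(L|X|)$ by Goodwillie/Burghelea--Fiedorowicz, with the localization playing exactly the role you assign it. The problem is that the two steps you yourself flag as ``the technical heart'' are left without an argument, and the methods you propose for them would not go through. First, the necklace-length (word-length) spectral sequence does not converge for non-simply-connected $X$: in $\mathbf{\Omega}(C_*(X))$ each total degree contains words of arbitrary length in degree-zero letters (the $1$-simplices), so the filtration is unbounded in every degree and a term-by-term comparison of $E^1$-pages cannot be concluded; this is precisely the failure mode of Adams' theorem beyond the simply connected case, and it infects the Hochschild and coHochschild complexes equally. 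The paper avoids filtrations entirely: it writes down an explicit natural chain contraction (the maps $\pi$, $\alpha$, $H$ of Proposition \ref{contraction}, assembled into Theorem \ref{hoch-cohoch}) between the two-sided bar resolution and the small resolution $\mathcal{Q}(\mathsf{M},C,\mathsf{N})$, valid over any commutative ring with no convergence question. Second, the exactness of the localization is not proved by an Ore or flatness condition inside the coHochschild complex; it is proved at the level of dg categories: $\mathbf{\Omega}(C_*(X))\cong\mathbf{\Lambda}(X)$ is cofibrant via the dg-nerve Quillen adjunction (Theorems \ref{dgnervecobar} and \ref{dgnervequillen}), the map $\mathbf{k}[\mathcal{Z}(C)]\to\mathbf{\Omega}(C)$ is a cofibration, so the strict localization is a homotopy pushout (Remark \ref{derivedloc}, Theorem \ref{pathcategory}), and then Quillen's classical bar-resolution argument shows that replacing $\mathbf{\Omega}(C)$ by $\widehat{\mathbf{\Omega}}(C)$ in the middle slot of the bar construction, with coefficients already $\widehat{\mathbf{\Omega}}(C)$-modules, is a quasi-isomorphism. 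Without some such argument your ``main obstacle'' simply remains open.

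The other genuine flaw is your use of the adjoint action in the main comparison. The paper's proof of the theorem never invokes an adjoint action or a twisting cochain: the coHochschild complex is compared to the Hochschild complex of the cobar dg category directly, and no antipode is needed anywhere. The identification with a twisted tensor product via the adjoint action (Theorem \ref{browncohoch}) is a separate result, proved only for \emph{reduced} simplicial sets, because it requires the dg Hopf algebra structure on $\widehat{\mathbf{\Omega}}(C_*(X))$, and producing that antipode is itself a nontrivial theorem (Theorem \ref{cobarhopf}, via the Hess--Tonks maps and Takeuchi's lemma adapted to the dg setting). For an arbitrary many-vertex $X$, matching ``the localized coHochschild differential to the adjoint-action twisting cochain'' would require a Hopf-category (many-object antipode) structure that is established nowhere, so that route fails in exactly the generality the theorem claims. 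The fix is to drop the adjoint action from the main argument altogether --- it is a corollary-level refinement for reduced $X$, not an ingredient of the proof.
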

As our proof will reveal, this result may be understood as a simplification of a theorem proved by Goodwillie in \cite{Goodwillie} and independently by Burghelea and Fedorowicz in \cite{BF} saying the following. For any path-connected pointed topological space $(Y,b)$, the Hochschild chain complex of the Pontryagin dg algebra $C_*^{\text{sing}}(\Omega_bY)$ of singular chains on the based (Moore) loop space of $Y$ at $b$ is naturally quasi-isomorphic to $C^{\text{sing}}_*(LY)$. Our streamlined model is essentially deduced from this result using (a generalization of) the fact that for any conilpotent dg coalgebra $C$ we have two resolutions for the dg algebra $A= \text{Cobar}(C)$ as an $A$-bimodule: 1) the classical two-sided bar resolution $\text{Bar}(A,A,A)$, and 2) a smaller resolution $\mathcal{Q}(A,C,A)$ with underlying module $A \otimes C \otimes A$. The first one is used when defining the Hochschild complex of $A$, while the second one is used when defining the coHochschild complex of $C$. We describe an explicit natural quasi-isomorphism of $A$-bimodules 
\[\mathcal{Q}(A,C,A) \xrightarrow{\simeq} \text{Bar}(A,A,A), \]
see Proposition \ref{contraction}. 

Finally, we establish a relationship between the extended coHochschild complex model for the free loop space and Brown's twisted tensor product model for a fibration. This involves proving that a natural dg bialgebra structure constructed on the extended cobar construction of a reduced simplicial set is in fact a dg Hopf algebra. We then model the holonomy of the free loop space fibration in terms of the adjoint action of such dg Hopf algebra. This relationship with Brown's twisted tensor product may be used to give an algebraic model of the inclusion $Y \to LY$ of points as constant loops in terms of the coHochschild complex. We expect this to be useful in studying and computing the string topology of non-simply connected manifolds. 
\begin{ack} This research was supported by NSF Grant 210554 and the Karen EDGE Fellowship. The author would like to thank Julian Holstein, Ralph Kaufmann, Andrey Lazarev, Anibal Medina, Yang Mo, Maximilien Peroux, Dennis Sullivan, Daniel Tolosa, and Mahmoud Zeinalian, for fruitful exchanges. 
\end{ack}

\section{Preliminaries}
Fix a commutative ring with unit $\mathbf{k}$. We assume familiarity with the notions of differential graded (dg) $\mathbf{k}$-modules, $\mathbf{k}$-algebras, $\mathbf{k}$-coalgebras and $\mathbf{k}$-categories. For generalities about dg categories and their homotopy theory we refer to \cite{tabuada1}, \cite{tabuada2}, and \cite{toen}. All (co)algebras in this article will be (co)associative and (co)unital. All differentials will have degree $-1$. We denote by $\mathsf{Ch}_{\mathbf{k}}$ the category of dg $\mathbf{k}$-modules (i.e. $\mathbf{k}$-chain complexes) and $\mathsf{Ch}_{\mathbf{k}}^{\geq 0}$ its full sub-category of non-negatively graded objects. Denote by $\mathsf{dgAlg}_{\mathbf{k}}$, $\mathsf{dgCoalg}_{\mathbf{k}}$, and $\mathsf{dgCat}_{\mathbf{k}}$ the categories of dg algebras, dg coalgebras, and small dg categories, respectively. \textit{In this article, we will furthermore assume that all dg algebras and coalgebras are flat as $\mathbf{k}$-modules and all dg categories are locally $\mathbf{k}$-flat.} An additional subscript of ``$\geq 0$" in the notation for these categories will also mean the full sub-category of non-negatively graded objects. Whenever we write $\otimes$ we mean $\otimes_{\mathbf{k}}$, unless noted otherwise. All signs in this article are determined by the Koszul sign convention.

For any graded algebra $A$, we denote by $A^{op}$ the graded algebra with $A$ as underlying  $\mathbf{k}$-module and multiplication defined by $\mu_A \circ t$, where
$t: A \otimes A \to A \otimes A$ is given by $t(a \otimes b) = (-1)^{|a||b|} b \otimes a$ and $\mu \colon A \otimes A \to A$ is the multiplication of $A$. Similarly, for any graded coalgebra $C$, we denote by $C^{op}$ the graded coalgebra with $X$ as underlying $\mathbf{k}$-module and coproduct defined by $t \circ \Delta$ where $\Delta  \colon C \to C \otimes C$ is the coproduct of $C$.

For any set $S$ we denote by $\mathbf{k}[S]$ the $\mathbf{k}$-coalgebra whose underlying $\mathbf{k}$-module is freely generated by $S$ and whose coproduct $\Delta \colon \mathbf{k}[S] \to \mathbf{k}[S]\otimes \mathbf{k}[S]$ is determined by $\Delta(s)=s \otimes s$ for any $s \in S$. The counit $\varepsilon \colon \mathbf{k}[S]\to \mathbf{k}$ is detrmined by $\varepsilon(s)=1_{\mathbf{k}}$ for any $s \in S.$

\subsection{Cotensor product}

Let $C$ be a dg $\mathbf{k}$-coalgebra. Let $M$ and $N$ be dg right and left $C$-comodules, respectively, with coaction maps
$\rho_M\colon M \to M \otimes C$ and $\rho_N\colon N \to C \otimes N$. The \textit{cotensor product of $M$ and $N$ over $C$} is defined as
\begin{eqnarray}
M \underset{C}{\msquare} N = \text{ker}( \rho_M \otimes \text{id}_N - \text{id}_M \otimes \rho_N \colon M \otimes N \to M \otimes C \otimes N). 
\end{eqnarray}

The category of $\mathbf{k}$-flat dg $C$-bicomodules, denoted by $C$-$\mathsf{biComod}$, becomes a monoidal category when equipped with the cotensor product $\square_C$ and unit object $C$. Suppose $A$ is a monoid in this category, namely, a dg $C$-bicomodule equipped with an associative product $A \square_C A \to A$ and unit $u: C \to A$. Let $E$ and $F$ be dg right and left $C$-comodules, respectively. Suppose that $E$ and $F$ are further equipped with right and left dg $A$-module structures respectively, namely, we have action maps
$\rho_E: E \square_C A \to E$ and $\rho_F: A \square_C F \to F$ satisfying the usual compatibilities. Define the \textit{tensor product of $E$ and $F$ over $A$} to be the dg $\mathbf{k}$-module 
\begin{eqnarray}\label{tensorproduct} E \underset{A}{\bigotimes} F = \text{coker}(\rho_E \underset{C}{\square} \text{id}_F -\text{id}_E \underset{C}{\square} \rho_F \colon  E \underset{C}{\square}  A \underset{C}{\square}  F \to E \underset{C}{\square} F).
\end{eqnarray}

\subsection{DG categories as monoids} Given any dg category $\mathsf{A} \in \mathsf{dgCat}_{\mathbf{k}}$ with object set $\mathcal{O}_{\mathsf{A}}$ define a monoid $\mathcal{M}(\mathsf{A})$ in the monoidal category of dg $\mathbf{k}[\mathcal{O}_{\mathsf{A}}]$-bicomodules equipped with the cotensor product $\square_{\mathbf{k}[\mathcal{O}_{\mathsf{A}}]}$ as follows. The underlying dg $\mathbf{k}$-module of $\mathcal{M}(\mathsf{A})$ is given by the direct sum 
\[ \bigoplus_{x,y \in \mathcal{O}_{\mathsf{A}}}\mathsf{A}(x,y).\]
The $\mathbf{k}[\mathcal{O}_{\mathsf{A}}]$-bicomodule structure maps
\[ \mathcal{M}(\mathsf{A}) \to \mathbf{k}[\mathcal{O}_{\mathsf{A}}] \otimes \mathcal{M}(\mathsf{A}) \] and
\[\mathcal{M}(\mathsf{A}) \to \mathcal{M}(\mathsf{A}) \otimes \mathbf{k}[\mathcal{O}_{\mathsf{A}}] 
\]
are induced by the source and target maps in $\mathsf{A}$, respectively.
The monoid structure 
\[\mathcal{M}(\mathsf{A}) \underset{\mathbf{k}[\mathcal{O}_{\mathsf{A}}]}{\msquare} \mathcal{M}(\mathsf{A}) \to \mathcal{M}(\mathsf{A}) \]
is induced by the composition of morphisms in $\mathsf{A}$ and the unit map
$\mathbf{k}[\mathcal{O}_{\mathsf{A}}]\to \mathcal{M}(\mathsf{A})$ is determined by $x \mapsto \text{id}_x \in \mathsf{A}(x,x)_0$ for all $x \in \mathcal{O}_{\mathsf{A}}.$

\subsection{Two sided bar construction} \label{barconstruction}
Let $\mathsf{A}$ be a dg category and, for simplicity, denote by $\mathsf{A}_0$ the coalgebra $\mathbf{k}[\mathcal{O}_{\mathsf{A}}].$ Let $\mathsf{M}$ and $\mathsf{N}$ be right and left dg modules over $\mathcal{M}(\mathsf{A})$, respectively, in the monoidal category $(\mathsf{A}_0\text{-}\mathsf{biComod}, \square_{\mathsf{A}_0})$. This means that $\mathsf{M}$ and $\mathsf{N}$ are $\mathbf{k}$-flat dg $\mathsf{A}_0$-bicomodules equipped dg maps
$$ \mathsf{M} \square_{\mathsf{A}_0} \mathcal{M}(\mathsf{A}) \to \mathsf{M} \text{ and } \mathcal{M}(\mathsf{A}) \square_{\mathsf{A}_0} \mathsf{N}\to \mathsf{N}$$
defining right and left dg $\mathcal{M}(\mathsf{A})$-actions, respectively. The \textit{two-sided bar construction of $\mathsf{M}$ and $\mathsf{N}$ over $\mathsf{A}$} is the dg $\mathsf{A}_0$-bicomodule
\[ \text{Bar}_{\mathsf{A}_0}(\mathsf{M}, \mathsf{A}, \mathsf{N}) \]
defined as follows. The underlying graded $\mathbf{k}$-module is defined to be
\[\bigoplus_{i=0}^{\infty} \Big(\mathsf{M} \underset{\mathsf{A}_0}{\msquare} \big(s^{+1}\overline{\mathcal{M}}(\mathsf{A})\big)^{\square i}  \underset{\mathsf{A}_0}{\msquare}\mathsf{N}\Big), \]
where $\overline{\mathcal{M}}(\mathsf{A})= \mathcal{M}(\mathsf{A}) / u(\mathsf{A_0})$, where $u: \mathsf{A}_0 \to \overline{\mathcal{M}}(\mathsf{A})$ is the unit map. 

We will use the classical ``bar" notation 
\[ m  [a_1 \big| \cdots \big|a_p]  n\]
to denote a generator 
\[m \square s^{+1}a_1 \square \cdots \square s^{+1}a_p \square n, \] 
where $m \in \mathsf{M}$, $n \in \mathsf{N}$, and $a_i \in \mathcal{M}(\mathsf{A})$ for $i=1,\cdots, p.$ 
The differential
\[\partial_{\mathsf{M},\mathsf{A},\mathsf{N}} \colon \bigoplus_{i=0}^{\infty} \Big(M \underset{\mathsf{A}_0}{\msquare} \big(s^{+1} \overline{\mathcal{M}}(\mathsf{A})\big)^{\square i}  \underset{\mathsf{A}_0}{\msquare} N\Big) \to \bigoplus_{i=0}^{\infty} \Big(M \underset{\mathsf{A}_0}{\msquare} \big(s^{+1}\overline{\mathcal{M}}(\mathsf{A})\big)^{\square i}  \underset{\mathsf{A}_0}{\msquare} N\Big) \]
is defined as the sum of linear maps
\[\partial_{\mathsf{M},\mathsf{A},\mathsf{N}}= d_{\mathsf{M}} \square \text{id}_{\mathcal{M}(\mathsf{A})} \square \text{id}_{\mathsf{N}} + \text{id}_{\mathsf{M}} \square D_{\mathcal{M}(\mathsf{A})}\square \text{id}_{\mathsf{N}} + \text{id}_{\mathsf{M}}  \square \text{id}_{\mathcal{M}(\mathsf{A})} \square d_{\mathsf{N}} + \theta, \]
where $d_{\mathsf{M}}, d_{\mathsf{N}}$, and $D_{\mathcal{M}(\mathsf{A})}$ are the differentials of $\mathsf{M}, \mathsf{N}$, and $\mathcal{M}(\mathsf{A})$, respectively, and $\theta$ is given by the following formula
\[\theta( m [a_1 \big| \cdots \big| a_p]n)= m\cdot a_1 [a_2\big| \cdots \big| a_p] n + \sum_{i=1}^{p-1} \pm m [a_1\big| \cdots \big| a_i \cdot a_{i+1} \big| \cdots \big| a_p]n \pm m [a_1\big| \cdots \big| a_{p-1}]a_p\cdot m. \]
The associativity of the monoid structure of $\mathcal{M}(\mathsf{A})$, the compatibilities of the differentials with the products and actions, and $d_M^2=d_N^2= D_{\mathcal{M}(\mathsf{A})}^2=0$ all together imply that $\partial_{\mathsf{M},\mathsf{A},\mathsf{N}}^2=0$. The $\mathsf{A}_0$-bicomodule structure on $\bigoplus_{i=0}^{\infty} \Big(M \underset{\mathsf{A}_0}{\msquare} \big(s^{+1}\overline{\mathcal{M}}(\mathsf{A})\big)^{\square i}  \underset{\mathsf{A}_0}{\msquare} N\Big)$ is given by the left and right $\mathsf{A}_0$-comodule structures of $\mathsf{M}$ and $\mathsf{N}$, respectively.
\subsection{The Hochschild complex}
For any dg category $\mathsf{A}$, the chain complex 
\[\text{Bar}_{\mathsf{A}_0}(\mathcal{M}(\mathsf{A}), \mathsf{A}, \mathcal{M}(\mathsf{A}) )\] has a natural dg $\mathcal{M}(\mathsf{A})$-bimodule structure in the category of $\mathsf{A}_0$-bicomodules with cotensor product $\square_{\mathsf{A}_0}.$ This construction is clearly functorial with respect to morphisms of dg categories. We recall the definition of the Hochschild chain complex. 

\begin{definition} Define a functor
\[\mathcal{CH}_*\colon \mathsf{dgCat}^{\geq 0}_{\mathbf{k}} \to \mathsf{Ch}_{\mathbf{k}}^{\geq 0}, \]
called the \textit{Hochschild complex}, as follows. For any $\mathsf{A} \in \mathsf{dgCat}^{\geq 0}_{\mathbf{k}}$, the underlying dg $\mathbf{k}$-module of $\mathcal{CH}_*(\mathsf{A})$ is defined by 
\[ 
\text{Bar}_{\mathsf{A}_0}(\mathcal{M}(\mathsf{A}), \mathsf{A}, \mathcal{M}(\mathsf{A}) ) \underset{\mathcal{M}(\mathsf{A}) \otimes \mathcal{M}(\mathsf{A})^{op}}{\bigotimes} \mathcal{M}(\mathsf{A}) ,\]
see \ref{tensorproduct} for notation. 

The generators of $\mathcal{CH}_*(\mathsf{A})$ may be written as $[a_1| \cdots |a_p]a_{p+1},$ where $a_{p+1} \in \mathcal{M}(\mathsf{A})$, $a_i \in \overline{\mathcal{M}}(\mathsf{A})$ with $\mathsf{s}(a_i)=\mathsf{t}(a_{i+1})$ for $i=1\cdots p$, and $\mathsf{t}(a_{p+1})=\mathsf{s}(a_p)$. Using this notation, the differential 
\[ \partial_{\mathsf{A}} \colon \mathcal{CH}_*(\mathsf{A}) \to \mathcal{CH}_{*-1}(\mathsf{A})\] is given by the same formula as the differential for the Hochschild complex of a dg algebra. One may equip this construction with a mixed complex structure via Connes' operator
\[B: \mathcal{CH}_*(\mathsf{A}) \to \mathcal{CH}_{*+1} (\mathsf{A}).\] In this setting, $B$ is given by
\[B([a_1| \cdots |a_p]a_{p+1}) =  \sum_{i=1}^{p+1} \pm [a_{i}| \cdots | a_{p+1}| a_1 | \cdots |a_{i-1}]\text{id}_{\mathsf{s}(a_{i})}s \]
Just as in the classical case of the Hochschild complex of a dg algebra, one may check that $(\bigoplus_{n=0}^{\infty} \mathcal{CH}_n(\mathsf{A}), \partial_{\mathsf{A}}, B)$ is a non-negatively graded mixed complex functorially associated to any $\mathsf{A} \in \mathsf{dgCat}^{\geq 0}_{\mathbf{k}}.$ 
\end{definition}

\section{Categorical coalgebras and the cobar construction}
In this section we define the notion of categorical coalgebras. This is a version of a curved coalgebra over a set of ``objects" or ``points" satisfying certain properties. Any categorical coalgebra gives rise to a dg category through a many object version of the cobar construction. These notions have been adapted from \cite{holstein-lazarev} in order to be applied to the algebraic topology setting and to work over an arbitrary commutative ring $\mathbf{k}$. The corresponding notion in \cite{holstein-lazarev} is that of a ``pointed curved coalgebra". 

\subsection{Categorical coalgebras} 

\begin{definition} \label{categoricalcoalgebra}
A \textit{categorical $\mathbf{k}$-coalgebra} consists of the data $C=(C, \Delta, d, h)$
such that
\begin{enumerate}
    \item $C= \bigoplus_{i=0}^{\infty} C_i$  is a non-negatively graded flat $\mathbf{k}$-module.
    \item \label{def2} $\Delta: C \to C \otimes C$ is a degree $0$ coassociative counital coproduct with counit $\varepsilon: C \to \mathbf{k}$
    \item The set \[\mathcal{S}(C):= \{ x \in C : \Delta(x)= x\otimes x, \varepsilon(x)=1_{\mathbf{k}} \}\] of
``set-like" elements in $C$ is non-empty and \[C_0 \cong \mathbf{k}[\mathcal{S}(C)].\]
    
    \item \label{def4} $d: C \to C$ is a linear map of degree $-1$ which is a graded coderivation of $\Delta.$
    \item \label{def5} The projection map $\epsilon: C \to C_0$ satisfies $\epsilon \circ d=0.$ In other words, $d: C_1 \to C_0$ is the zero map. 
    \item \label{def6} $h: C \to \mathbf{k}$ is a linear map of degree $-2$ satisfying $h \circ d =0$ and 
    \begin{eqnarray}\label{curvature}
    d\circ d= (h \otimes \text{id}) \circ (\Delta - \Delta^{op})
    \end{eqnarray}
    where $\Delta^{op}= t \circ \Delta$ for $t(x \otimes y)= (-1)^{|x||y|} y \otimes x$. The right hand side of the above equation is being considered as a map $C \to \mathbf{k} \otimes C \cong C.$ The map $h$ is called the \textit{curvature} of $C$. Equation \ref{curvature} may be rewritten as
    \[d^2(x) = \sum_{(x)} h(x')x'' + x'h(x'').\]

\end{enumerate}
\end{definition}

Any categorical coalgebra $C$ has a natural $C_0$-bicomodule structure with coaction maps
$$\rho_l: C \xrightarrow{\Delta} C \otimes C \xrightarrow{\epsilon \otimes \text{id}_C} C_0 \otimes C$$
and
$$\rho_r: C \xrightarrow{\Delta} C \otimes C \xrightarrow{\text{id}_C \otimes \epsilon} C \otimes C_0.$$
Furthermore, the coassociativity of $\Delta: C \to C\otimes C$ implies that $\Delta: C \to C \otimes C$ factors as $C \to C \square_{C_0} C \to C \otimes C$; so $C$ may be regarded as a comonoid in the category of graded $C_0$-bicomodules with monoidal structure given by the cotensor product $\square_{C_0}.$

\begin{remark} Note that it is possible for a categorical coalgebra to have non-zero cuvature and for $d: C \to C$ to square zero.
\end{remark}

\begin{definition}\label{morphism} A \textit{morphism of categorical coalgebras} $C=(C, \Delta, d, h)$ and  $C'=(C', \Delta', d', h')$  consists of a pair $(f_0, f_1)$ where
\begin{enumerate}
\item $f_0: (C, \Delta) \to (C', \Delta')$ is a morphism of graded $\mathbf{k}$-coalgebras, 
\item $f_1: C \to C_0'$ is a $C_0'$-bicomodule map of degree $-1$ such that the composition $\bar{f_1}= \varepsilon \circ f_1$, where $\varepsilon'$ is the counit of $C'$, satisfies
\begin{eqnarray}\label{morphism1}
f_0 \circ d = d' \circ f_0 + (\bar{f_1} \otimes f_0) \circ (\Delta - \Delta^{op})
\end{eqnarray}
and
\begin{eqnarray}\label{morphism2}
h' \circ f_0= h + \bar{f_1} \circ d + (\bar{f_1} \otimes \bar{f_1}) \circ \Delta.
\end{eqnarray}

\end{enumerate}
The composition of two morphisms of categorical coalgebras is defined by  
\begin{eqnarray}\label{composition}
(g_0, g_1) \circ (f_0, f_1)= (g_0 \circ f_0 , g_1 \circ f_0 + g_0 \circ f_1).
\end{eqnarray}
Denote by $\mathsf{cCoalg}_{\mathbf{k}}$ the category of categorical coalgebras. 
\end{definition}

\subsection{The cobar functor} \label{cobarconstruction}
Working over a field and with unbounded complexes, Holstein and Lazarev define in \cite{holstein-lazarev} a functor from pointed curved coalgebras to dg categories extending the classical cobar functor from conilpotent dg coalgebras to augmented dg algebras. The same construction can be defined for categorical coalgebras over an arbitrary ring. We now describe this construction in our setting. 
\begin{definition}
Define a functor
\[\mathbf{\Omega} \colon \mathsf{cCoalg}_{\mathbf{k}} \to \mathsf{dgCat}_{\mathbf{k}}^{\geq 0},\]
called the \textit{cobar functor}, as follows. Given any $C= (C, d, \Delta, h) \in \mathsf{cCoalg}_\mathbf{k}$, the objects of $\mathbf{\Omega}(C)$ are the elements of the set $\mathcal{S}(C)$ of set-like elements in $C$.

For any $x \in \mathcal{S}(C)$ denote by $i_x: \mathbf{k} \to C_0=\mathbf{k}[\mathcal{S}(C)]$ the map determined by $i_x(1_\mathbf{k})= x$. The map $i_x$ gives rise to a $C_0$-bicomodule structure on $\mathbf{k}$ through the  maps $$\mathbf{k} \cong \mathbf{k} \otimes \mathbf{k} \xrightarrow{i_x \otimes \text{id}_{\mathbf{k}}} C_0 \otimes \mathbf{k}$$ and $$\mathbf{k} \cong \mathbf{k} \otimes \mathbf{k} \xrightarrow{\text{id}_\mathbf{k} \otimes i_x} \mathbf{k} \otimes C_0.$$ We denote this $C_0$-bicomodule by $\mathbf{k}_x$ and its generator by $\text{id}_x.$

Write $C= \overline{C} \oplus C_0$. Denote by $s^{-1}\overline{C}$ the graded $\mathbf{k}$-module obtained by applying the shifting $\overline{C}$ by $-1$. We have the following three degree $-1$ maps: 
\begin{enumerate}
\item $\overline{d}\colon s^{-1}\overline{C} \to s^{-1}\overline{C}$
\item $\overline{\Delta}\colon s^{-1}\overline{C} \to s^{-1}\overline{C} \otimes s^{-1}\overline{C}, \text{ and }$
\item $\overline{h}\colon s^{-1}\overline{C} \xrightarrow{s^{+1}} \overline{C} \xrightarrow{\rho_r} C \otimes C_0 \xrightarrow{h \otimes \textit{id}} \mathbf{k} \otimes C_0 \cong C_0.$
\end{enumerate}

For any two $x,y \in \mathcal{S}(C)$ define a non-negatively graded $\mathbf{k}$-module by
$$\mathbf{\Omega}(C)(x,y)= \bigoplus_{i=0}^{\infty} \mathbf{k}_x \underset{C_0}{\msquare} (s^{-1}\overline{C})^{\square i} \underset{C_0}{\msquare} \mathbf{k}_y,$$
where $(s^{-1}\overline{C})^{\square i} $ denotes the $i$-fold cotensor product of $C_0$-bicomodules and $(s^{-1}\overline{C})^{\square 0}=C_0$.
We will use the notation
\[ \{c_1| \cdots |c_p\}\]
to denote a generator
\[ \text{id}_x \square s^{-1}c_1 \square \cdots \square s^{-1}c_p \square \text{id}_y \in \mathbf{\Omega}(C)(x,y). \] We say the monomial $\{c_1| \cdots |c_p\}$ has \textit{length} $p$. In particular, note 
\[ \mathbf{\Omega}(C)(x,x)_0 = \mathbf{k}_x \oplus \mathbf{k}_x \square s^{-1}C_1 \square \mathbf{k}_x .\]

The differential $$D_{x,y}: \mathbf{\Omega}(C)(x,y)_k \to \mathbf{\Omega}(C)(x,y)_{k-1}$$ is defined by extending
\[\overline{h} + \overline{d} + \overline{\Delta} \colon \mathbf{k}_x \square s^{-1}\overline{C} \square \mathbf{k}_y \to
\mathbf{k}_x \square C_0 \square \mathbf{k}_y \oplus
\mathbf{k}_x \square s^{-1}\overline{C} \square \mathbf{k}_y \oplus \mathbf{k}_x \square (s^{-1}\overline{C})^{\square 2} \square \mathbf{k}_y
\]
as a ``derivation" to monomials of arbitrary length. It follows directly from (\ref{def2}), (\ref{def4}), and (\ref{def6}) in Definition \ref{categoricalcoalgebra} that $D_{x,y} \circ D_{x,y}=0$. 
The composition in $\mathbf{\Omega}(C)$ is given by concatenation of monomials. For every $x \in \mathcal{S}(C)$, $1_{\mathbf{k}} \in  \mathbf{k}_x \cong \mathbf{k}_x \square C_0 \square \mathbf{k}_x \subset \mathbf{\Omega}(C)(x,x)_0$ is the identity morphism.

Given a morphism $(f_0,f_1): C \to C'$ between categorical coalgebras, define a morphism
\[\mathbf{\Omega}(f_0,f_1)\colon \mathbf{\Omega}(C) \to \mathbf{\Omega}(C') \]
of dg categories as follows. Since $f_0: C\to C'$ is a map of coalgebras, $f_0$ restricts to a map of sets $\mathcal{S}(C) \to \mathcal{S}(C')$, which defines the functor $\mathbf{\Omega}(f_0,f_1)$ on objects. For any two $x,y \in \mathcal{S}(C) $
define 
\[ \mathbf{\Omega}(f_0,f_1)_{x,y} \colon \mathbf{\Omega}(C)(x,y) \to \mathbf{\Omega}(C')(f_0(x), f_0(y)) \]
by extending the map 
\begin{eqnarray} \mathbf{k}_x  \square  s^{-1} \overline{C} \square \mathbf{k}_y \longrightarrow \mathbf{k}_{f_0(x)} \square s^{-1} \overline{C} \square \mathbf{k}_{f_0(y)} \oplus \mathbf{k}_{f_0(x)} \square C_0' \square  \mathbf{k}_{f_0(y)}
\\
\{c\} \longmapsto \{f_0(c)\} + \text{id}_{f_0(x)} \square f_1(c) \square \text{id}_{f_0(y)}
\end{eqnarray}
``multiplicatively" to monomials $\{c_1 | \cdots | c_p \}$ of arbitrary length. 
Note that \[ \mathbf{k}_{f_0(x)} \square C' _0 \square  \mathbf{k}_{f_0(y)}\] is a non-trivial $\mathbf{k}$-module if and only if $f_0(x)=f_0(y)$, in which case it is isomorphic to $\mathbf{k}$. Hence, $\text{id}_{f_0(x)} \square f_1(c) \square \text{id}_{f_0(y)}$ may be identified with a scalar. It follows directly from \ref{morphism1} and \ref{morphism2} that $\mathbf{\Omega}(f_0,f_1)_{x,y}$ is a chain map for each $x,y \in \mathcal{O}_{C}$ and from \ref{composition} that compositions are compatible.

\end{definition}
\begin{remark} When $\mathbf{k}$ is a field, a categorical $\mathbf{k}$-coalgebra is a pointed curved $\mathbf{k}$-coalgebra $C$ (as defined in \cite{holstein-lazarev}) that is non-negatively graded and whose coradical is exactly the degree zero summand $C_0 \subseteq C$ (which is assumed to be non-trivial). In this case, the splitting map (which is part of the structure in the definition of a pointed curved coalgebra) is precisely the projection map $C \to C_0$. In particular, by the definition of a pointed curved coalgebra, $C_0$ is generated by the set-like (sometimes called ``group-like") elements of $C$. 
\end{remark}
\begin{definition}
A $B_{\infty}$-\textit{categorical coalgebra} is a categorical coalgebra $C$ equipped with degree $0$ coassociative coproducts
\[\nabla_{x,y}: \mathbf{\Omega}(C)(x,y) \to \mathbf{\Omega}(C)(x,y) \otimes \mathbf{\Omega}(C)(x,y)\]
for all $x,y \in \mathcal{S}(C)$ making $\mathbf{\Omega}(C)$ into a category enriched over $(\mathsf{dgCoalg}_{\mathbf{k}}^{\geq 0}, \otimes)$, the monoidal category of differential non-negatively graded coassociative counital $\mathbf{k}$-coalgebras. $B_{\infty}$-categorical coalgebras form a category when equipped with maps of categorical coalgebras that preserve the additional structure. We denote this category by $\mathsf{B}_{\infty}\text{-}\mathsf{cCoalg}_{\mathbf{k}}$. This notion has also been considered in \cite{MRZ}.
\end{definition}
\subsection{The extended cobar functor}
We define a new version of the cobar construction by formally inverting set-like elements in the dg coalgebra of morphisms of the cobar construction of a  $B_{\infty}$-categorical coalgebra, generalizing a construction of \cite{hess2010loop}. This will give rise to a functorial construction that recovers the dg category of paths of the geometric realization of a simplicial set $X$ when applied to a $B_{\infty}$-categorical coalgebra of chains on $X$, as it will be discussed in Section \ref{chainsandpaths}. 

Let \[\mathcal{Z} \colon \mathsf{B_{\infty}\text{-}cCoalg}_{\mathbf{k}} \to \mathsf{Cat}\] be the functor defined as follows. For any $C\in  \mathsf{B_{\infty}\text{-}cCoalg}_{\mathbf{k}}$, the set of objects of $\mathcal{Z}(C)$ is $\mathcal{S}(C)$. For any two objects $x$ and $y$
\[\mathcal{Z}(C)(x,y)= \{ f \in 
\mathbf{\Omega}(C)(x,y) | \nabla_{x,y}(f) = f \otimes f \text{ and } \varepsilon_{x,y}(f)=1 \} .\] In other words, $\mathcal{Z}(C)(x,y)=\mathcal{S}(\mathbf{\Omega}(C)(x,y), \nabla_{x,y}).$  Since the composition in $\mathbf{\Omega}(C)$ is compatible with the dg coalgebra structures on the morphisms and identity morphisms are set-like, it follows that $\mathcal{Z}(C)$ becomes a category with composition induced by that of $\mathbf{\Omega}(C)$. The functoriality of the construction follows since $\mathbf{\Omega}$ is a functor and taking set-like elements in a coalgebra is functorial. 

\begin{definition}
Define a functor
\[\widehat{\mathbf{\Omega}} \colon \mathsf{B_{\infty}\text{-}cCoalg}_{\mathbf{k}} \to \mathsf{dgCat}_{\mathbf{k}}^{\geq 0},\]
called the \textit{extended cobar functor}, by letting
\[ \widehat{\mathbf{\Omega}}(C)=  \mathbf{\Omega}(C)[ \mathcal{Z}(C)^{-1}],\]
namely, by formally (strictly) inverting the set of $0$-cycles determined by the morphisms of $\mathcal{Z}(C)$ inside $\mathbf{\Omega}(C)$.
\begin{remark} \label{derivedloc} In practice, we will consider the above construction when the natural map \[\mathbf{k}[\mathcal{Z}(C)] \to \mathbf{\Omega}(C)\]
is a cofibration of cofibrant and locally $\mathbf{k}$-flat dg categories, where $\mathbf{k}[\mathcal{Z}(C)]$ denotes the dg category obtained by linearizing the morphisms of $\mathcal{Z}(C)$ and defining each differential to be trivial.

In this case, the strict localization \[ \widehat{\mathbf{\Omega}}(C)=  \mathbf{\Omega}(C)[ \mathcal{Z}(C)^{-1}]\] is a homotopical localization. In other words, under these hypotheses, an $\mathbf{\Omega}$-quasi-equivalence $f: C \to C'$ between categorical coalgebras induces a quasi-equivalence 
$\widehat{\mathbf{\Omega}}(f) \colon \widehat{\mathbf{\Omega}}(C) \to \widehat{\mathbf{\Omega}}(C')$ of dg categories. This follows since we may interpret the extended cobar functor as a pushout of dg categories
\[\widehat{\mathbf{\Omega}}(C) = \mathbf{\Omega}(C)\underset{\mathbf{k}[\mathcal{S}(C)]}{\bigsqcup} \mathbf{k}[\mathcal{Z}(C)][\mathcal{Z}(C)^{-1}]. \] 
This pushout is a homotopy pushout in Tabuada's model structure on $\mathsf{dgCat}_{\mathbf{k}}$ when $\mathbf{k}[\mathcal{Z}(C)] \to \mathbf{\Omega}(C)$ is a cofibration of dg categories, since both $\mathbf{k}[\mathcal{Z}(C)]$ and \newline $\mathbf{k}[\mathcal{Z}(C)][\mathcal{Z}(C)^{-1}]$ are locally $\mathbf{k}$-flat dg categories and consequently left proper objects, see \cite{holstein-proper}.

\end{remark}

\end{definition}

\section{Chains on a simplicial set and the dg category of paths} \label{chainsandpaths}

The first goal of this section is to describe a version of the normalized simplicial chains as a functor
\[C_* \colon \mathsf{sSet} \to \mathsf{B_{\infty}\text{-}cCoalg}_{\mathbf{k}}.\]
Then we show that, for any simplicial set $X$, the extended cobar construction applied to the $\mathsf{B}_{\infty}$-categorical coalgebra of chains $C_*(X)$ yields a model for the dg category of paths on the topological space $|X|$.

\subsection{Chains as a categorical coalgebra}
For any simplicial set $X$, denote by  $(\overline{C}_*(X), \partial)$ the dg $\mathbf{k}$-module of normalized simplicial chains. Recall the Alexander-Whitney coproduct, given on any simplex $\sigma \in X_n$ by
\[\Delta(\sigma) = \sum_{i=0}^n \sigma(0,...,i) \otimes \sigma(i,...,n),\]
induces a coassociative coproduct \[\Delta: \overline{C}_*(X) \to \overline{C}_*(X) \otimes \overline{C}_*(X)\] of degree $0$. In the above formula, $\sigma(0,...,i)$ and $\sigma(i,...,n)$ denote the first $i$-th and last $(n-i)$-th faces of $\sigma,$ respectively. This construction gives rise to a functor 
\[C^{\Delta}_*: \mathsf{sSet} \to \mathsf{dgCoalg}^{\geq 0}_{\mathbf{k}} \] given by 
\[C^{\Delta}_*(X)=(\overline{C}_*(X), \partial, \Delta)\]

For any two simplicial sets $X$ and $Y$, the natural Eilenberg-Zilber shuffle map
$$\text{EZ}_{X,Y}: \overline{C}_*(X) \otimes \overline{C}_*(Y) \to \overline{C}_*(X \times Y)$$
is a map of dg coalgebras and consequently makes $C^{\Delta}_*$ into a lax monoidal functor, as explained in 17.6 of \cite{EilMoo66}.

The projection map $\epsilon: \overline{C}_*(X) \to \overline{C}_0(X)$ does not satisfy $\epsilon \circ \partial=0.$ However, as suggested in \cite{holstein-lazarev}, the differential $\partial$ may be modified to obtain a categorical coalgebra as follows.

\begin{definition} \label{chains} For any $X \in \mathsf{sSet}$ define a categorical coalgebra $\widetilde{C}^{\Delta}_*(X) \in \mathsf{cCoalg}_{\mathbf{k}}$ as follows.
The underlying graded $\mathbf{k}$-module of  $\widetilde{C}^{\Delta}_*(X)$ is exactly $\overline{C}_*(X)$, which is given by $\overline{C}_n(X)= \mathbf{k}[X_n]/D(X_n)$, where $D(X_n)\subseteq \mathbf{k}[X_n]$ is the sub $\mathbf{k}$-module  generated by degenerate $n$-simplices. 

Let $e: \mathbf{k}[X_1] \to \mathbf{k}$ be the linear map sending degenerate $1$-simplices to $0 \in \mathbf{k}$ and non-degenerate $1$-simplices to $1 \in \mathbf{k}$. The map $e$ induces a linear map $\widetilde{e}: \overline{C}_1(X) \to \mathbf{k}.$ Define a new differential
\[\widetilde{\partial}: \overline{C}_*(X) \to \overline{C}_{*-1}(X)\] 
by \[\widetilde{\partial}= \partial - (\text{id}\otimes \widetilde{e} - \widetilde{e}\otimes\text{id}) \circ \Delta.\]

The map $\widetilde{\partial}$ is a coderivation of $\Delta$ and the projection map $\epsilon: \overline{C}_*(X) \to \overline{C}_0(X)$ now satisfies $\epsilon \circ \widetilde{\partial}=0$. Finally, define $h: \overline{C}_2(X) \to \mathbf{k}$ by 
\[h= (\widetilde{e} \otimes \widetilde{e}) \circ \Delta + \widetilde{e} \circ \partial.
\]

A routine check yields that
\[\widetilde{C}^{\Delta}_*(X)=(\overline{C}_*(X), \widetilde{\partial}, \Delta, h)\]
defines an object in $\mathsf{cCoalg}_\mathbf{k}$. Furthermore, this construction gives rise to a functor
\[\widetilde{C}^{\Delta}_*: \mathsf{sSet} \to \mathsf{cCoalg}_{\mathbf{k}}.\]
\end{definition}
The following result establishes a connection between the cobar functor from categorical coalgebras to dg categories and the dg nerve functor originally defined in \cite{higheralgebra}.

\begin{theorem} \label{dgnervecobar} The composition of functors
\[ \mathbf{\Omega} \circ \widetilde{C}^{\Delta}_* \colon\mathsf{sSet} \to \mathsf{dgCat}_{\mathbf{k}}\]
is naturally isomorphic to the (1-categorical) left adjoint of the dg nerve functor 
\[ \emph{N}_{dg} \colon \mathsf{dgCat}_{\mathbf{k}} \to \mathsf{sSet}.\]
\end{theorem}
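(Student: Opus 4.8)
The plan is to identify both functors as the cocontinuous extensions of their restrictions to representables, and then to match those restrictions by an explicit computation. Recall that $N_{dg}$ is a nerve in the strict sense: there is a cosimplicial dg category $[n]\mapsto \mathfrak{C}_{dg}(\Delta^n)$ with $N_{dg}(\mathsf{A})_n = \mathrm{Hom}_{\mathsf{dgCat}_{\mathbf{k}}}(\mathfrak{C}_{dg}(\Delta^n),\mathsf{A})$. Consequently $N_{dg}$ preserves limits, its $1$-categorical left adjoint $\mathfrak{C}_{dg}$ exists, and it is canonically the left Kan extension of $[n]\mapsto \mathfrak{C}_{dg}(\Delta^n)$ along the Yoneda embedding $\Delta\hookrightarrow\mathsf{sSet}$, so that $\mathfrak{C}_{dg}(X)=\mathrm{colim}_{\Delta^n\to X}\mathfrak{C}_{dg}(\Delta^n)$. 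Explicitly, $\mathfrak{C}_{dg}(\Delta^n)$ is the free graded $\mathbf{k}$-linear category on the quiver with vertices $0,\dots,n$ and one generating arrow $f_S\colon\min S\to\max S$ of degree $|S|-2$ for each subset $S\subseteq\{0,\dots,n\}$ with $|S|\geq 2$, equipped with the differential sending $f_S$ to the signed sum of the interior-face terms $f_{S\smallsetminus\{s\}}$ and the cut terms $f_{S''}\circ f_{S'}$. By the density (nerve--realization) theorem, a cocontinuous functor out of $\mathsf{sSet}=\mathrm{Psh}(\Delta)$ is determined up to canonical isomorphism by its restriction to $\Delta$; hence it suffices to show that $\mathbf{\Omega}\circ\widetilde{C}^{\Delta}_*$ is cocontinuous and to produce a natural isomorphism of cosimplicial dg categories $\mathbf{\Omega}(\widetilde{C}^{\Delta}_*(\Delta^\bullet))\cong\mathfrak{C}_{dg}(\Delta^\bullet)$.

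For cocontinuity, I would first note that $\widetilde{C}^{\Delta}_*$ preserves colimits: the underlying normalized-chains functor is a left adjoint, colimits of (categorical) coalgebras are created by the forgetful functor to $\mathsf{Ch}_{\mathbf{k}}$ because $\otimes_{\mathbf{k}}$ is cocontinuous, and the defining conditions persist; in particular $C_0(-)\cong\mathbf{k}[(-)_0]$ is cocontinuous, since both evaluation at $[0]$ and $\mathbf{k}[-]$ are, so the identification $C_0\cong\mathbf{k}[\mathcal{S}(C)]$ is preserved. For $\mathbf{\Omega}$ the key point is that $C_0=\mathbf{k}[\mathcal{S}(C)]$ is group-like, so the cotensor products in the necklace description $\mathbf{\Omega}(C)(x,y)=\bigoplus_p \mathbf{k}_x\square(s^{-1}\overline{C})^{\square p}\square\mathbf{k}_y$ reduce to direct sums over matching vertices; these are manifestly cocontinuous in $C$, so $\mathbf{\Omega}$ preserves the colimits at hand. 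Hence the composite is cocontinuous.

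The heart of the proof is the comparison on $\Delta^\bullet$. Computing directly, the set-like elements of $\widetilde{C}^{\Delta}_*(\Delta^n)$ are the vertices $0,\dots,n$, and $\mathbf{\Omega}(\widetilde{C}^{\Delta}_*(\Delta^n))(i,j)$ is spanned by necklaces $\{\sigma_{S_1}|\cdots|\sigma_{S_p}\}$ of nondegenerate simplices, i.e.\ subsets $S_\ell$ of size $\geq 2$, forming a path from $i$ to $j$. Under $\sigma_S\mapsto f_S$ this is exactly a $\mathbf{k}$-basis of $\mathfrak{C}_{dg}(\Delta^n)(i,j)$, and the shift $s^{-1}$ makes the degrees agree. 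It then remains to match the differentials: the cobar differential $D$ is assembled from the modified differential $\widetilde{\partial}$, the Alexander--Whitney coproduct $\overline{\Delta}$, and the curvature $\overline{h}$, and on a generator $s^{-1}\sigma_S$ the $\overline{\Delta}$-term reproduces the cut terms $f_{S''}\circ f_{S'}$, the $\widetilde{\partial}$-term reproduces the interior-face terms $f_{S\smallsetminus\{s\}}$, while the correction built into $\widetilde{\partial}$ and $h$ through $\widetilde{e}$ is precisely what makes the $1$-simplices act as identities and the $|S|=2,3$ cases come out correctly. Finally one checks naturality in $[n]$: the coface and codegeneracy operators induce the same maps on both sides, the codegeneracies being the delicate case and handled by the normalization. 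This yields the isomorphism of cosimplicial dg categories, and by cocontinuity the asserted natural isomorphism $\mathbf{\Omega}\circ\widetilde{C}^{\Delta}_*\cong\mathfrak{C}_{dg}$.

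I expect the main obstacle to be the sign- and degeneracy-bookkeeping in this differential comparison: reconciling the Koszul signs generated by the shift $s^{-1}$ and by extending $\overline{d}+\overline{\Delta}+\overline{h}$ as a derivation with the signs in the defining equations of $N_{dg}$, and verifying that the $\widetilde{e}$/curvature modification encodes exactly the treatment of degenerate edges and of the low-length relations. A secondary but genuine technical point is making the cocontinuity of $\mathbf{\Omega}$ airtight, i.e.\ confirming that colimits in $\mathsf{cCoalg}_{\mathbf{k}}$ interact with the cotensor product and the set-like condition as claimed.
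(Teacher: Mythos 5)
Your overall architecture --- nerve--realization/density plus an explicit comparison of cosimplicial dg categories on $\Delta^\bullet$ --- is a legitimate route, and it is genuinely different from the paper's: the paper does not prove the theorem directly but cites Section 4 of Holstein--Lazarev, where the result is obtained by factoring $N_{dg}$ as a composite of two right adjoints (a bar-type construction from dg categories to pointed curved coalgebras, followed by a coalgebra-level nerve right adjoint to the chains functor), so that $\mathbf{\Omega}\circ\widetilde{C}^{\Delta}_*$ is a left adjoint of $N_{dg}$ simply by composing adjunctions, with no colimit bookkeeping at all. Your central computation is essentially correct and is of the same substance as the cited comparisons: $\mathbf{\Omega}(\widetilde{C}^{\Delta}_*(\Delta^n))(i,j)$ has a basis of necklaces of subsets matching the free graded-category description of $\mathfrak{C}_{dg}(\Delta^n)$, the reduced Alexander--Whitney coproduct produces exactly the cut terms, the $\widetilde{e}$-correction in $\widetilde{\partial}$ cancels precisely the two extreme faces so that only interior faces survive, and on $\Delta^n$ itself (where every nondegenerate $2$-simplex has nondegenerate edges) the curvature vanishes; the curvature and the $(f_0,f_1)$-morphism structure genuinely enter only when checking naturality with respect to codegeneracies, which you rightly flag as the delicate case.

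The genuine gap is the cocontinuity step, and the justifications you offer for it would fail. First, there is no forgetful functor $\mathsf{cCoalg}_{\mathbf{k}}\to\mathsf{Ch}_{\mathbf{k}}$: by Definition \ref{morphism}, a morphism $(f_0,f_1)$ satisfies $f_0\circ d = d'\circ f_0 + (\bar{f_1}\otimes f_0)\circ(\Delta-\Delta^{op})$, so $f_0$ need not commute with the differentials (and nonzero $f_1$ really occurs for simplicial maps that collapse edges), hence colimits in $\mathsf{cCoalg}_{\mathbf{k}}$ are not ``created by the forgetful functor to $\mathsf{Ch}_{\mathbf{k}}$''; even cocompleteness of $\mathsf{cCoalg}_{\mathbf{k}}$ and colimit-preservation by $\widetilde{C}^{\Delta}_*$ require a real argument. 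Second, and more seriously, for $\mathbf{\Omega}$: the observation that the hom-complexes $\bigoplus_p \mathbf{k}_x\,\square\,(s^{-1}\overline{C})^{\square p}\,\square\,\mathbf{k}_y$ depend cocontinuously on $C$ does not imply that $\mathbf{\Omega}$ preserves colimits, because colimits in $\mathsf{dgCat}_{\mathbf{k}}$ are not computed hom-wise: a pushout of dg categories, for instance, has morphism complexes built from free products of composable words, not from pushouts of the individual morphism complexes. So ``manifestly cocontinuous'' is not a proof, and this is exactly the point where your plan needs the missing ingredient: one must exhibit $\mathbf{\Omega}$ (and $\widetilde{C}^{\Delta}_*$) as a left adjoint, via the categorical bar--cobar adjunction. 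But once those adjunctions are in hand --- which is precisely the content of the Holstein--Lazarev proof the paper cites --- the theorem follows by composing them and identifying the composite right adjoint with $N_{dg}$, so the density argument becomes a detour rather than a shortcut.
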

\begin{proof}
This result was proved in section 4 of \cite{holstein-lazarev}. The one object case was proved in \cite{rivera2018cubical}. 
\end{proof}

The dg nerve and its left adjoint also fit into a Quillen adjunction as we now record. Denote the left adjoint of $\text{N}_{dg}$ by 
\[ \mathbf{\Lambda} \colon \mathsf{sSet} \to \mathsf{dgCat}_{\mathbf{k}}.\]

\begin{theorem} \label{dgnervequillen} The adjunction
\[ \mathbf{\Lambda} \colon \mathsf{sSet} \rightleftarrows \mathsf{dgCat}_{\mathbf{k}} \colon \emph{N}_{dg} \]
is a Quillen adjunction  of model categories when $\mathsf{sSet}$ is equipped with Joyal's model structure and $\mathsf{dgCat}_{\mathbf{k}}$ with Tabuada's model structure. 
In particular, $\mathbf{\Lambda} \cong \mathbf{\Omega} \circ \widetilde{C}^{\Delta}_*$ sends categorical equivalences of simplicial sets to quasi-equivalences of dg categories.
\end{theorem}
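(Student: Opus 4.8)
The plan is to verify that $(\mathbf{\Lambda}, \mathrm{N}_{dg})$ is a Quillen adjunction via the mixed criterion: an adjunction is Quillen precisely when the left adjoint preserves cofibrations and the right adjoint preserves fibrations (these two conditions being, through the adjunction, equivalent to the assertion that $\mathbf{\Lambda}$ preserves both cofibrations and trivial cofibrations). Since Joyal's model structure is cofibrantly generated with generating cofibrations the boundary inclusions $\partial\Delta^n \hookrightarrow \Delta^n$, and $\mathbf{\Lambda}$ is a left adjoint hence cocontinuous, the first condition reduces to showing each $\mathbf{\Lambda}(\partial\Delta^n \hookrightarrow \Delta^n)$ is a cofibration of dg categories; closure of cofibrations under pushout, transfinite composition and retract, together with the fact that every monomorphism of simplicial sets is a relative $\{\partial\Delta^n\hookrightarrow\Delta^n\}$-cell complex, then upgrades this to preservation of all cofibrations. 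As a bonus, preservation of cofibrations is equivalent through the adjunction to $\mathrm{N}_{dg}$ preserving trivial fibrations, so that half comes for free.

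For the boundary inclusions I would use the identification $\mathbf{\Lambda}\cong \mathbf{\Omega}\circ\widetilde{C}^{\Delta}_*$ of Theorem \ref{dgnervecobar} together with the explicit form of the cobar construction. The dg category $\mathbf{\Lambda}(\Delta^n)$ has objects $0,\dots,n$ and is \emph{semifree}: as a graded $\mathbf{k}$-category it is freely generated by the cobar generators $\{c\}$ attached to the nondegenerate simplices $c$ of $\Delta^n$, with differential $D$ determined by $\overline{d}$, $\overline{\Delta}$ and $\overline{h}$. Passing from $\partial\Delta^n$ to $\Delta^n$ adjoins a single new nondegenerate simplex, the top cell $\iota_n$, hence a single generating morphism $\{\iota_n\}$ of degree $n-1$ whose differential $D\{\iota_n\}$ is a cycle built entirely from proper faces of $\iota_n$ and their Alexander--Whitney splittings, all of which already lie in $\mathbf{\Lambda}(\partial\Delta^n)$. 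Therefore $\mathbf{\Lambda}(\partial\Delta^n)\hookrightarrow\mathbf{\Lambda}(\Delta^n)$ is the pushout of a generating cofibration of Tabuada's model structure (free attachment of a degree $n-1$ morphism along the prescribed boundary cycle), and in particular a cofibration.

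For the second condition I would use that $\mathrm{N}_{dg}(\mathsf{A})$ is always a quasi-category, so both source and target of $\mathrm{N}_{dg}(F)$ are Joyal-fibrant; hence $\mathrm{N}_{dg}(F)$ is a Joyal fibration iff it is an inner fibration and an isofibration. Given a fibration $F\colon \mathsf{A}\to\mathsf{B}$ of dg categories, I translate each lifting problem through the adjunction. An inner horn $\Lambda^n_i\hookrightarrow\Delta^n$ (with $0<i<n$) adjoins, after applying $\mathbf{\Lambda}$, the two generators $\{d_i\iota_n\}$ and $\{\iota_n\}$ in degrees $n-2$ and $n-1$; filling it in $\mathrm{N}_{dg}$ amounts to lifting a compatible pair of hom-complex elements along $F$ subject to the differential constraint, which is possible because a Tabuada fibration is degreewise surjective on hom-complexes. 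Thus $\mathrm{N}_{dg}(F)$ is an inner fibration. The isofibration condition is the lifting property against $\{0\}\hookrightarrow J$, with $J$ the nerve of the walking isomorphism; under $\mathrm{N}_{dg}$ this corresponds exactly to lifting along $F$ an isomorphism of $H^0(\mathsf{B})$ with prescribed source, which is the second defining clause of a Tabuada fibration. Hence $\mathrm{N}_{dg}(F)$ is a Joyal fibration, completing the verification that $(\mathbf{\Lambda},\mathrm{N}_{dg})$ is Quillen.

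Finally, since every simplicial set is cofibrant in Joyal's model structure, Ken Brown's lemma (left Quillen functors preserve weak equivalences between cofibrant objects) yields that $\mathbf{\Lambda}$ sends categorical equivalences to quasi-equivalences, giving the last assertion. The main obstacle is the second condition, that $\mathrm{N}_{dg}$ preserves fibrations: one cannot argue by preserving generating trivial cofibrations because Joyal's generating trivial cofibrations admit no convenient description, so the argument must instead route through the characterization of Joyal fibrations between fibrant objects and match, one by one, the inner-horn and isomorphism-lifting conditions for $\mathrm{N}_{dg}(F)$ against the two defining properties of a Tabuada fibration, namely degreewise surjectivity on hom-complexes and lifting of $H^0$-isomorphisms.
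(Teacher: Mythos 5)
Your proposal takes a genuinely different route from the paper: the paper's entire proof of this theorem is a citation of Proposition 1.3.1.20 of \cite{higheralgebra}, plus the remark that the second claim follows from cofibrancy of all simplicial sets in Joyal's model structure (your final paragraph reproduces this part exactly). So what you are really doing is reconstructing the content of the cited proposition. Much of your reconstruction is correct: the mixed criterion does characterize Quillen adjunctions; the cofibration half is solid, since $\mathbf{\Lambda}(\partial\Delta^n)\to\mathbf{\Lambda}(\Delta^n)$ is the free attachment of the single generator $\{\iota_n\}$ of degree $n-1$ along the cycle $D\{\iota_n\}\in\mathbf{\Lambda}(\partial\Delta^n)$ (and for $n=0$ it is the generating cofibration $\emptyset\to\mathbf{k}$); and since $\mathrm{N}_{dg}$ of \emph{any} dg category is a quasi-category, Joyal's characterization of fibrations between fibrant objects does apply to $\mathrm{N}_{dg}(F)$ for every Tabuada fibration $F$, so the ``between fibrant objects'' restriction costs nothing.

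There is, however, a genuine gap at the inner-horn step. You justify the lifting solely by degreewise surjectivity of a Tabuada fibration on hom-complexes. But that justification applies verbatim to outer horns: $\mathbf{\Lambda}(\Lambda^n_0)\to\mathbf{\Lambda}(\Delta^n)$ also freely attaches exactly two generators $\{d_0\iota_n\}$ and $\{\iota_n\}$, so your argument as written would prove that every dg nerve is a Kan complex, which is false. The missing idea is where $0<i<n$ is used: for an inner horn, every Alexander--Whitney term of $D\{\iota_n\}$ involves only front faces $\iota_n(0,\dots,j)$ and back faces $\iota_n(j,\dots,n)$, all of which lie in $\Lambda^n_i$, so that $D\{\iota_n\}=\pm\{d_i\iota_n\}+\xi$ with $\xi\in\mathbf{\Lambda}(\Lambda^n_i)$ and the missing face occurring linearly with coefficient $\pm 1$. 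One then lifts the image of $\{\iota_n\}$ arbitrarily using surjectivity and \emph{defines} the lift of $\{d_i\iota_n\}$ by the differential constraint; compatibility with $F$ and with the prescribed differential of $\{d_i\iota_n\}$ is then automatic. For $i=0$ this decomposition fails: $D\{\iota_n\}$ contains the monomial $\pm\{\iota_n(0,1)|d_0\iota_n\}$ (the Alexander--Whitney term pairing the first edge with $d_0\iota_n$), so the missing generator appears with coefficient $\pm(\mathrm{id}\pm\{\iota_n(0,1)\})$ and solving for it requires inverting that edge. This is precisely where innerness enters, and it must appear explicitly for the proof to be valid. A smaller gap of the same nature sits in your isofibration step: to translate lifting against $\{0\}\to J$ (equivalently, Joyal's equivalence-lifting condition) into Tabuada's $H_0$-isomorphism-lifting clause, you need that an edge of $\mathrm{N}_{dg}(\mathsf{B})$ is an equivalence exactly when its class in $H_0(\mathsf{B})$ is invertible, i.e. that the homotopy category of the quasi-category $\mathrm{N}_{dg}(\mathsf{B})$ is $H_0(\mathsf{B})$; this is true but is itself one of the things the cited proposition establishes, so it cannot be taken for granted.
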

\begin{proof} This is Proposition 1.3.1.20 in \cite{higheralgebra}. The second statement follows since all simiplicial sets are cofibrant in Joyal's model structure.
\end{proof}

\begin{remark} In general, the functor $\mathbf{\Lambda}$ does not send weak homotopy equivalences of simplicial sets to quasi-equivalences of dg categories. However, if $f \colon X \to X'$ is a weak homotopy equivalence and $X$ and $X'$ are ``group-like", namely, their homotopy categories are groupoids, then $\mathbf{\Lambda}(f)$ is a quasi-equivalence of dg categories. 
\end{remark}

\subsection{Chains as a $B_{\infty}$-categorical coalgebra.} \label{chainsccoalg}
We now describe a natural lift of $\widetilde{C}^{\Delta}_*$ to the category $\mathsf{B}_{\infty}\text{-}\mathsf{cCoalg}_{\mathbf{k}}.$ For simplicity we will denote this lift by 
\[ C_* \colon \mathsf{sSet} \to \mathsf{B}_{\infty}\text{-}\mathsf{cCoalg}_{\mathbf{k}}.\]

To define this lift we use the factorization of $\mathbf{\Lambda}$, constructed in Section 6 of \cite{rivera2018cubical}, through the category of small categories enriched over cubical sets (with connections).  More precisely, in \cite{rivera2018cubical} we constructed a functor
\[ \mathfrak{C}_{\square_c} \colon \mathsf{sSet} \to \mathsf{Cat}_{\square_c} \]
from simplicial sets to the category of small categories enriched over the monoidal category of cubical sets (with connections) with Day convolution product. Then we showed that $\mathbf{\Lambda}$ is naturally isomorphic to the composition 
\[  \mathsf{sSet} \xrightarrow{\mathfrak{C}_{\square_c}} \mathsf{Cat}_{\square_c} \xrightarrow{\mathfrak{Q}_{\square_c}} \mathsf{dgCat}_{\mathbf{k}},\]
where $\mathfrak{Q}_{\square_c}$ is the functor obtained by applying the monoidal functor $Q_*$ of normalized cubical chains at the level of morphisms.  

The chain complex of normalized cubical chains $Q_*(K)$ on a cubical set $K$, with or without the extra data of connections,  has a natural coproduct structure
\[\nabla \colon Q_*(K) \to Q_*(K) \otimes Q_*(K) \]
making $Q_*(K)$ into a dg coalgebra. It is completely determined by its action on the standard $0$-cube and $1$-cube given by the formulas
\[ \nabla [0] = [0] \otimes [0]\]
and
\[ \nabla \colon [0,1] \mapsto [0] \otimes [0,1] + [0,1] \otimes [1],\] respectively. 

Hence, the natural isomorphism $ \Lambda \cong \mathfrak{Q}_{\square_c} \circ \mathfrak{C}_{\square_c}$ together with this cubical coproduct provides a natural lift of $\mathbf{\Lambda} \colon \mathsf{sSet} \to \mathsf{dgCat}_{\mathbf{k}}$ to the category of small categories enriched over the monoidal category of dg $\mathbf{k}$-coalgebras. Using the identification $\mathbf{\Lambda} \cong \mathbf{\Omega} \circ \widetilde{C}^{\Delta}_*$, we may interpret this additional structure as a functor
\[ C_* \colon \mathsf{sSet} \to \mathsf{B}_{\infty}\text{-}\mathsf{cCoalg}_{\mathbf{k}}\]
lifting
\[ \widetilde{C}^{\Delta}_*\colon \mathsf{sSet} \to \mathsf{cCoalg}_{\mathbf{k}}.\]
In particular, if $X$ is a simplicial set with one vertex (i.e. a reduced simplicial set) then this construction provides the dg algebra $\mathbf{\Omega}(C^{\Delta}_*(X))$ with a natural dg bialgebra structure. 

In the one vertex case, a version of this construction has been studied in detail in \cite{baues1980geometry}, \cite{Bau81} and, more recently, in \cite{RM}. In the many vertex case, this version of the chains functor has been also discussed in \cite{MRZ}. We refer the reader to these references for more details. 

\subsection{The extended cobar construction as a model for the path category}
For any topological space $Y$ denote by $\mathcal{P}Y$ the topologically enriched category whose objects are the points of $Y$ and morphisms $\mathcal{P}Y(x,y)$ are given by the space (with compact-open topology) of pairs $(r, \gamma)$ where $r$ is a non-negative real number (which we call the ``parameter") and $\gamma \colon [0,r] \to Y$ a continuous path with $\gamma(0)=x$ and $\gamma(r)=y$. Composition is given by concatenation of paths and adding the corresponding parameters. Identities are constant paths with parameter $r=0$.
We call $\mathcal{P}Y$ the \textit{path category} of $Y$.

Denote by $C_*^{\text{sing}}(\mathcal{P}Y)$ the dg $\mathbf{k}$-category obtained by applying the normalized singular chains functor (equipped with the Elienberg-Zilber lax structure) on the morphisms of the topologically enriched category $\mathcal{P}Y$. This gives rise to a functorial construction \[ \mathsf{Top} \to \mathsf{dgCat}_{\mathbf{k}}\] \[Y \mapsto C_*^{\text{sing}}(\mathcal{P}Y)\] that sends weak homotopy equivalences of spaces to quasi-equivalence of dg categories. 

\begin{theorem} \label{pathcategory} For any simplicial set $X$, the dg categories $\widehat{\mathbf{\Omega}}(C_*(X))$ and $C_*^{\emph{sing}}(\mathcal{P}|X|)$ are naturally quasi-equivalent.
\end{theorem}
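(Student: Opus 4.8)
The plan is to construct an explicit natural comparison functor using Adams' cobar-type construction, factor it through the localization, and then check it is a quasi-equivalence by reducing, via homotopy invariance, to the single-vertex case already understood through the cobar model for the based loop space.

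First I would build, using Adams' construction in its cubical incarnation (as developed in \cite{rivera2018cubical} and underlying the $B_{\infty}$-structure of Section \ref{chainsccoalg}), a natural dg functor
\[ \Theta_X \colon \mathbf{\Omega}(\widetilde{C}^{\Delta}_*(X)) = \mathbf{\Lambda}(X) \longrightarrow C_*^{\text{sing}}(\mathcal{P}|X|). \]
On objects it sends a vertex $x \in \mathcal{S}(\widetilde{C}^{\Delta}_*(X)) = X_0$ to the corresponding point of $|X|$; on morphisms it sends a cobar monomial $\{c_1|\cdots|c_p\}$ to the singular chain on the path space obtained from Adams' map \cite{adams}, which assigns to a sequence of simplices a cube's worth of paths in $|X|$, with compatibility of concatenation of monomials and concatenation of paths built into the cubical enrichment. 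The essential point is that each $1$-simplex determines a path, and every path is invertible up to homotopy in $\mathcal{P}|X|$ via its reverse, so the $0$-cycles in $\mathcal{Z}(C_*(X))$ are sent to morphisms invertible in the homotopy category $H_0(C_*^{\text{sing}}(\mathcal{P}|X|))$, the fundamental groupoid of $|X|$. Since this homotopy category is a groupoid, the universal property of the homotopical localization of Remark \ref{derivedloc} lets me factor $\Theta_X$ through a natural dg functor
\[ \widehat{\Theta}_X \colon \widehat{\mathbf{\Omega}}(C_*(X)) \longrightarrow C_*^{\text{sing}}(\mathcal{P}|X|). \]

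Next I would observe that $\widehat{\Theta}_X$ is essentially surjective on homotopy categories: its objects are the vertices $X_0$, and every point of $|X|$ is joined to a vertex by a path inside its carrying simplex. It then remains to prove quasi-full-faithfulness, namely that for all vertices $x,y$ the map
\[ \widehat{\mathbf{\Omega}}(C_*(X))(x,y) \longrightarrow C_*^{\text{sing}}(\mathcal{P}|X|(x,y)) \]
is a quasi-isomorphism. Both source and target are homotopy-invariant functors of $X$: the target because $Y \mapsto C_*^{\text{sing}}(\mathcal{P}Y)$ carries weak homotopy equivalences to quasi-equivalences, and the source because, after inverting $\mathcal{Z}$, the extended cobar lands in group-like dg categories, where categorical and weak equivalences agree, so the homotopy-pushout description of Remark \ref{derivedloc} together with the group-like case of the remark following Theorem \ref{dgnervequillen} shows that $\widehat{\mathbf{\Omega}}(C_*(-))$ sends weak homotopy equivalences to quasi-equivalences. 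Consequently it suffices to verify the quasi-isomorphism for one representative in each weak homotopy type. Choosing representatives, I would reduce to connected $X$ and then to a reduced (single-vertex) model $X'$ weakly equivalent to $X$ by collapsing a maximal tree in the $1$-skeleton; this identifies every $\mathcal{P}|X|(x,y)$ with the based loop space $\Omega|X'|$ up to homotopy and, by naturality, identifies the corresponding hom-complexes of $\widehat{\mathbf{\Omega}}$. For reduced $X'$ with unique vertex $\ast$, the diagonal hom-complex $\widehat{\mathbf{\Omega}}(C_*(X'))(\ast,\ast)$ is exactly the extended cobar of the reduced chain coalgebra, quasi-isomorphic to $C_*^{\text{sing}}(\Omega|X'|)$ by the cobar model for the based loop space established without nilpotency or simple-connectivity hypotheses in \cite{rivera2018cubical} (see also \cite{RM}), and under this identification $\widehat{\Theta}_{X'}$ is precisely Adams' map. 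Off-diagonal pairs and disconnected $X$ follow by translating along a fixed path and working one path-component at a time.

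The hardest step is the homotopy invariance of the extended cobar for arbitrary weak homotopy equivalences: $\mathbf{\Lambda}$ is only known to preserve categorical (Joyal) equivalences, and upgrading to Kan--Quillen weak equivalences genuinely requires the localization at $\mathcal{Z}$ and the left-properness and cofibrancy hypotheses of Remark \ref{derivedloc} to realize $\widehat{\mathbf{\Omega}}$ as a homotopy-invariant functor landing in group-like dg categories. Establishing that the tree-collapse reduction to a reduced model is compatible on the nose with $\widehat{\Theta}$, so that the known based-loop quasi-isomorphism transports correctly to every $\mathcal{P}|X|(x,y)$, is where the real care lies; the remaining verifications (signs in the Adams map, coassociativity of the cubical coproduct underlying $\mathcal{Z}$, and compatibility of concatenation with composition) are routine.
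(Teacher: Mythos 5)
Your architecture (build an explicit Adams-type comparison functor $\widehat{\Theta}_X$, then check quasi-equivalence hom-by-hom) differs from the paper's, but it has a genuine gap at exactly the step you flag as hardest, and the justification you offer for that step does not hold together. You claim $\widehat{\mathbf{\Omega}}(C_*(-))$ sends weak homotopy equivalences to quasi-equivalences because ``the extended cobar lands in group-like dg categories, where categorical and weak equivalences agree.'' This conflates two different notions: the remark following Theorem \ref{dgnervequillen} is about group-like \emph{simplicial sets} as input to $\mathbf{\Lambda}$, not about dg categories with groupoid $H_0$ as output. Knowing that $H_0(\widehat{\mathbf{\Omega}}(C_*(X)))$ is a groupoid says nothing, by itself, about whether the map induced by a Kan--Quillen equivalence $X \to Y$ becomes a quasi-equivalence after localization; to invoke the cited remark you must first exhibit $\widehat{\mathbf{\Omega}}(C_*(X))$, naturally in $X$, as $\mathbf{\Lambda}$ of a group-like simplicial set receiving a weak homotopy equivalence from $X$. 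That is precisely the content of the paper's proof: identify $\mathbf{k}[\mathcal{Z}(C_*(X))] \to \mathbf{\Omega}(C_*(X))$ with $\mathbf{\Lambda}(\mathrm{sk}_1X) \to \mathbf{\Lambda}(X)$, use Remark \ref{derivedloc} to present $\widehat{\mathbf{\Omega}}(C_*(X))$ as a homotopy pushout, replace $\mathbf{\Lambda}(\mathrm{sk}_1X)[X_1^{-1}]$ by $\mathbf{\Lambda}(\mathcal{K}(\mathrm{sk}_1X))$, and use that $\mathbf{\Lambda}$ is left Quillen to conclude $\widehat{\mathbf{\Omega}}(C_*(X)) \simeq \mathbf{\Lambda}\bigl(X \coprod_{\mathrm{sk}_1X} \mathcal{K}(\mathrm{sk}_1X)\bigr)$, whose input is group-like and weakly equivalent to $X$. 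So the ``homotopy invariance'' you defer to is not an available ingredient; it is the theorem's actual crux, and filling it in means reproducing the paper's argument, after which your remaining machinery (tree collapse, reduction to the reduced case, translation along paths) is a detour, since the group-like identification already finishes the proof.

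Two smaller problems. First, your factorization of $\Theta_X$ through the localization cannot use the strict universal property of $\mathbf{\Omega}(C_*(X))[\mathcal{Z}(C_*(X))^{-1}]$: path reversal in $\mathcal{P}|X|$ is an inverse only up to homotopy, so edges are not sent to strictly invertible $0$-cycles. You must instead use the universal property of the derived localization (legitimate here thanks to Remark \ref{derivedloc}), which produces $\widehat{\Theta}_X$ only as a morphism in the homotopy category of dg categories --- sufficient for proving the theorem, but not the on-the-nose natural dg functor you describe. Second, your citation for the reduced case is off: for an arbitrary reduced simplicial set $X'$ (not Kan), the plain cobar construction of \cite{rivera2018cubical} does not compute $C_*^{\text{sing}}(\Omega|X'|)$; the statement you need concerns the localized (extended) cobar and is the Hess--Tonks theorem \cite{hess2010loop}, which the paper itself invokes in the proof of Theorem \ref{cobarhopf}. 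If these repairs were made, your route would yield something the paper's proof does not provide, namely an explicit natural comparison map realizing the quasi-equivalence rather than a zigzag obtained from abstract homotopy-pushout manipulations; but as written, the pivotal step is missing rather than merely sketched.
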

\begin{proof}
The natural map 
\[  \mathbf{k}[ \mathcal{Z}(C_*(X))] \to \mathbf{\Omega}(C_*(X))\]
is a cofibration between cofibrant dg categories, since, by Theorem \ref{dgnervecobar}, it may be identified with
\[ \mathbf{\Lambda}(i) \colon \mathbf{\Lambda}(\text{sk}_1X) \to \mathbf{\Lambda}(X),\]
where $\mathbf{\Lambda} \colon \mathsf{sSet} \to \mathsf{dgCat}_{\mathbf{k}}$ is the left adjoint of the dg nerve functor and $i \colon \text{sk}_1X \hookrightarrow X$ the inclusion of the $1$-skeleton of $X$ into $X$. By Remark \ref{derivedloc}, the (ordinary) pushout $\widehat{\mathbf{\Omega}}(C_*(X)) \cong \mathbf{\Lambda}(X)[X_1^{-1}]$ is actually a homotopy pushout of the maps $\mathbf{\Lambda}(i) \colon \mathbf{\Lambda}(\text{sk}_1X) \to \mathbf{\Lambda}(X)$ and $\mathbf{\Lambda}(\text{sk}_1X) \to \mathbf{\Lambda}(\text{sk}_1X)[X_1^{-1}]$, where $\mathbf{\Lambda}(\text{sk}_1X)[X_1^{-1}]$ is the dg category obtained by linearizing the free groupoid generated by the quiver $X_1 \rightrightarrows X_0$ determined by the first two simplicial face maps. Let \[\mathcal{K} \colon \mathsf{sSet} \to \mathsf{sSet}\] be a Kan replacement functor so that there is a natural quasi-equivalence of dg categories \[\mathbf{\Lambda}(\text{sk}_1X)[X_1^{-1}] \simeq \mathbf{\Lambda}(\mathcal{K}(\text{sk}_1X)).\]

By Theorem \ref{dgnervequillen}, $\mathbf{\Lambda} \colon \mathsf{sSet} \to \mathsf{dgCat}_{\mathbf{k}}$ is a left Quillen functor between Joyal's model structure on simplicial sets and Tabuada's model structure on dg categories, thus $\mathbf{\Lambda}$ preserves homotopy pushouts. 
Hence, we have natural quasi-equivalence of dg categories
\[\widehat{\mathbf{\Omega}}(C_*(X))\cong \mathbf{\Lambda}(X)[X_1^{-1}] \simeq \mathbf{\Lambda}( X \coprod_{\text{sk}_1X} \mathcal{K}(\text{sk}_1X)).\]  Note the map \[ X \to X \coprod_{\text{sk}_1X} \mathcal{K}(\text{sk}_1X)\] is a weak homotopy equivalence of simplicial sets and the homotopy category of $X \coprod_{\text{sk}_1X} \mathcal{K}(\text{sk}_1X)$ is a groupoid. It follows that $\mathbf{\Lambda}( X \coprod_{\text{sk}_1X} \mathcal{K}(\text{sk}_1X))$ is naturally quasi-equivalent to the dg category $C_*^{\text{sing}}(\mathcal{P}|X \coprod_{\text{sk}_1X} \mathcal{K}(\text{sk}_1X)|)$ and consequently to $C_*^{\text{sing}}(\mathcal{P}|X|)$, as desired. 

\end{proof}

\section{The coHochschild complex of a categorical coalgebra}
We define a version of the coHochschild complex for categorical coalgebras. Then we establish a relationship with the Hochschild complex of a dg category. The coHochschild complex in the case of connected dg coalgebras has been sutided in \cite{hesscohochschild} and \cite{hess-shipley}.
\subsection{The coHochschild complex}
We construct a functor
\[\mathcal{coCH}_* \colon \mathsf{cCoalg}_{\mathbf{k}} \to \mathsf{Ch}_{\mathbf{k}}^{\geq 0},
\]
called the \textit{coHochschild complex}, as follows. For any categorical coalgebra $C$, the underlying graded $\mathbf{k}$-module of $\mathcal{coCH}_*(C)$ is defined by
\[ C \underset{C_0 \otimes C_0^{op}}{\msquare} \mathcal{M}(\mathbf{\Omega}(C)):= (C \underset{C_0}{\msquare} \mathcal{M}(\mathbf{\Omega}(C))) \bigcap (C \underset{C_0^{op}}{\msquare} \mathcal{M}(\mathbf{\Omega}(C)))
\]
Explicitly, this notation is saying that $\mathcal{coCH}_k(C)$ is generated as a graded $\mathbf{k}$-module by monomials
\[x=x_0 \square s^{-1}x_1 \square \cdots \square s^{-1}x_p=x_0\{x_1| \cdots |x_p\},
\]
where $x_0 \in C$, $x_i \in \overline{C}$ for $i=1,\cdots,p$, $x_p \otimes x_0 \in C \underset{C_0}{\Box} C$, and $|x_0| + |x_1|+ \cdots + |x_p| - p = k$. 
The differential 
\[\partial \colon \mathcal{coCH}_k(C) \to \mathcal{coCH}_{k-1}(C)\] is defined by
\begin{eqnarray*} \partial(x)= d_Cx_0 \{ x_1 |\cdots | x_p \}
+\sum_{i=1}^p \pm x_0 \{ x_1 | \cdots|d_Cx_i | \cdots | x_p\}+ \sum_{i=1}^{p} \pm x_0 \{x_1 | \cdots | \overline{h}(x_i) | \cdots |x_p\}
\\
+ \sum_{(x_0)} \pm x_0' \{ x_0''| x_1 | \cdots | x_p\}
+ \sum_{i=1}^p \sum_{(x_i)} \pm x_0 \{ x_1 | \cdots |x_i' | x_i''|  \cdots  |x_p\}
+ \sum_{(x_0)} \pm x_0'' \{ x_1 | \cdots| x_p |x_0'\}. 
\end{eqnarray*} 
The signs are determined, as usual, from the Koszul sign convention. One may also equip the chain complex $\mathcal{coCH}_*(C)$ with the further structure of a mixed complex by defining a degree $+1$ operator
\[P \colon \mathcal{coCH}_*(C) \to \mathcal{coCH}_{*+1}(C)\] through the formula
\begin{eqnarray*}
P(x_0 \{x_1 | \cdots | x_p\})=
\\
\sum_{i=1}^p \pm \varepsilon(x_0)x_i \{ x_{i+1} | \cdots |  x_p |  x_1|  \cdots |x_{i-1}\},
\end{eqnarray*}
where $\varepsilon: C\to \mathbf{k}$ denotes the counit of $C$.
A straightforward computation yields that \[ (\mathcal{coCH}_*(C), \partial, P)\] is a non-negatively graded mixed $\mathbf{k}$-complex. This construction is clearly functorial with respect to maps of categorical coalgebras. 

\subsection{The extended coHochschild complex} Define the \textit{extended Hochschild complex} as the functor
\[\widehat{\mathcal{coCH}}_* \colon \mathsf{B}_{\infty}\text{-}\mathsf{cCoalg}_{\mathbf{k}} \to \mathsf{Ch}_{\mathbf{k}}^{\geq 0}
\]
given by
\[ C \underset{C_0 \otimes C_0^{op}}{\msquare} \mathcal{M}(\widehat{\mathbf{\Omega}}(C)).
\]
\subsection{Relationship with the Hochschild chain complex}

Let $C$ be a categorical coalgebra and $\mathsf{M}$ and $\mathsf{N}$ right and left dg modules over $\mathcal{M}(\mathbf{\Omega}(C))$, respectively, as in \ref{barconstruction}. Define a graded $\mathbf{k}$-module
 \[\mathcal{Q}(\mathsf{M}, C, \mathsf{N}):= \mathsf{M} \underset{C_0}{\square} C \underset{C_0}{\square}  \mathsf{N} \]
and consider the linear map \[ \partial_{\mathcal{Q}}  \colon \mathcal{Q}(\mathsf{M}, C, \mathsf{N}) \to \mathcal{Q}(\mathsf{M}, C, \mathsf{N})\]
of degree $-1$ defined by
\[
\partial_{\mathcal{Q}}=
d_{\mathsf{M}} \square \text{id}_{C} \square \text{id}_{\mathsf{N}} + \text{id}_{\mathsf{M}} \square d_C \square \text{id}_{\mathsf{N}} + \text{id}_{\mathsf{M}}  \square \text{id}_{C} \square d_{\mathsf{N}} + \theta', \]
where
\[\theta'(m \square c \square n)=
\pm ( m \cdot \{c'\} ) \square c'' \square n \pm m \square  c' \square (\{c''\} \cdot n).\]
\begin{remark}
Note that $\partial_{\mathcal{Q}}$ may not square to zero, since $d_C$ may not square to zero in a categorical coalgebra. When $C$ is a connected dg coalgebra then $\mathcal{Q}( \mathsf{M}, C, \mathsf{N})$ is indeed a chain complex. 

\end{remark}
We will now define a map $H$ of degree $+1$ and two maps $\pi$ and $\alpha$ of degree $0$ fitting in the diagram
\[
\begin{tikzcd}
	 {\text{Bar}_{C_0}(\mathsf{M}, \mathbf{\Omega}(C), \mathsf{N})}\arrow[loop left, distance=3em, start anchor={[yshift=-1ex]west}, end anchor={[yshift=1ex]west}]{}{H}& {\mathcal{Q}(\mathsf{M}, C, \mathsf{N}).}
	\arrow["\pi", shift left=2, from=1-1, to=1-2]
	\arrow["\alpha", shift left=2, from=1-2, to=1-1]
\end{tikzcd}
\]
These maps will satisfy three equations given in Proposition \ref{contraction}.

\begin{enumerate}
\item Define \[\pi\colon \text{Bar}_{C_0}(\mathsf{M}, \mathbf{\Omega}(C), \mathsf{N}) \to \mathcal{Q}(\mathsf{M}, C, \mathsf{N})\]
on any generator $m[a_1 \big| \cdots \big| a_p]n$ by letting
\[ \pi( m [a_1 \big| \cdots \big| a_p]  n)= 0 \text{ if $p>1$ }, \]
and when $p=1$, writing $a_1=\{c_1| \cdots |c_q\}$, define
\[ \pi(m [ \{c_1 | \cdots |c_q\}])n = \sum_{i=1}^q m \cdot \{c_1| \cdots |c_{i-1}\} \square [\{c_i\}] \square \{c_{i+1} | \cdots | c_q\} \cdot n\] if $q>0$, and
\[ \pi(m [ x ] n) = m \square x \square n\] if $q=0$ and $ x \in \mathcal{S}(C) \subset C_0.$
\item Define 
\[ \alpha \colon \mathcal{Q}(\mathsf{M}, C, \mathsf{N}) \to \text{Bar}_{C_0}(\mathsf{M}, \mathbf{\Omega}(C), \mathsf{N}) \]
by
\[ \alpha(m \square c \square n)= m [\{ c\} ] n + \sum m [\{c'\} \big| \{c''\}]n + \sum m [\{c'\} \big| \{c''\} \big| \{c'''\}]n + \cdots \]
Note $\alpha$ is well defined since the induced coproduct $\Delta \colon \overline{C} \to \overline{C} \otimes \overline{C}$ is of degree $0$ and $\overline{C}$ is concentrated on positive degrees. 
\item Define \[H:\text{Bar}_{C_0}(\mathsf{M}, \mathbf{\Omega}(C), \mathsf{N})  \to \text{Bar}_{C_0}(\mathsf{M}, \mathbf{\Omega}(C), \mathsf{N}) \] to be a degree $+1$ linear map given on a generator $m[a_1 \big| \cdots \big| a_p]n$ as follows. Write $a_1=\{c_1|\cdots|c_m\}$ and let
\[ H( m[\{c_1|\cdots|c_m\} \big| a_2 \big| \cdots \big| a_p]n) =0  \text{ if $m<2$ } ,\]
\begin{eqnarray*}
H(m[ \{c_1|c_2\} \big| a_2 \big| \cdots \big| a_p]n)= m[\{c_1\} \big| \{c_2\} \big| a_2 \big| \cdots \big| a_p]n + \\
\sum [\{c_1'\} \big| \{c_1''\} \big| \{c_2\} \big| a_2 \big| \cdots \big| a_p]n + \\
\sum [\{c_1'\} \big| \{c_1''\} \big|\{c_1'''\}\big| \{c_2\} \big| a_2 \big| \cdots \big| a_p]n + \cdots \text{ if $m=2$, }
\end{eqnarray*}
and, if $m >2$, let
\begin{eqnarray*}
H( m[\{c_1|\cdots|c_m\} \big| a_2 \big| \cdots \big| a_p]n) =
\\
  \sum_{i=1}^{m}m \cdot \{c_1| \cdots |c_{i-1}\} [ \{c_i \} \big| \{c_{i+1}| \cdots | c_m\} \big| a_2 \big| \cdots \big| a_p]n +
\\
\sum_{i=1}^{m}m \cdot \{c_1| \cdots |c_{i-1}\} [ \{c_i' \} \big| \{c_i''\} \big| \{c_{i+1}| \cdots | c_m\} \big| a_2 \big| \cdots \big| a_p]n +
\\
\sum_{i=1}^{m}m \cdot \{c_1| \cdots |c_{i-1}\} [ \{c_i' \} \big| \{c_i''\} \big| \{c_i'''\} \big| \{c_{i+1}| \cdots | c_m\} \big| a_2 \big| \cdots \big| a_p]n + \cdots
\end{eqnarray*}
\end{enumerate}
A tedious but straightforward computation verifies the following equations hold.
\begin{proposition} \label{contraction}
The maps $\pi$, $\alpha$, and $H$ defined above satisfy the equations 
 \begin{eqnarray}
\pi \circ \partial_{\mathsf{M}, C, \mathsf{N}}= \partial_{\mathcal{Q}} \circ \pi,
\\
\alpha \circ \partial_{\mathcal{Q}} = \partial_{\mathsf{M}, C, \mathsf{N}} \circ \alpha, \emph{  and  }
\\
H \circ \partial_{\mathsf{M}, C, \mathsf{N}} +\partial_{\mathsf{M}, C, \mathsf{N}} \circ H=  \alpha \circ \pi - \emph{id}_{\emph{Bar}_{C_0}(\mathsf{M}, \mathbf{\Omega}(C), \mathsf{N})}.
\end{eqnarray}
\end{proposition}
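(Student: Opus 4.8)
The three identities assert that the triple $(\pi,\alpha,H)$ is a deformation-retraction datum presenting $\mathcal{Q}(\mathsf{M},C,\mathsf{N})$ as a homotopy retract of $\text{Bar}_{C_0}(\mathsf{M},\mathbf{\Omega}(C),\mathsf{N})$; this is the categorical, curved analogue of the classical comparison between the two-sided bar construction over $\mathbf{\Omega}(C)$ and the smaller resolution $\mathcal{Q}$ attached to the universal twisting cochain $c\mapsto\{c\}$. The cleanest conceptual route is homotopy transfer: start from the tautological contraction of the bar complex onto its ``linear part'' (where only the internal differentials $d_{\mathsf{M}}$, $d_{\mathsf{N}}$, and $\overline{d}$ act), regard the concatenation term $\theta$, the coproduct splitting $\overline{\Delta}$, and the curvature $\overline{h}$ as a perturbation of that differential, and transport the contraction along the perturbation lemma; the maps it produces are then exactly $\pi$, $\alpha$, $H$, and the three equations are its defining output. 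I would nonetheless carry out the verification directly on generators, organizing it so the underlying bookkeeping stays transparent.

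All maps are $\mathbf{k}$-linear extensions of their values on generators $m[a_1|\cdots|a_p]n$, so it suffices to check the three equations there. First I would record how the summands of $\partial_{\mathsf{M},C,\mathsf{N}}$ move the two relevant gradings, the \emph{bar-length} $p$ (number of bracketed entries) and the \emph{total cobar-length}: $\theta$ lowers $p$ by one, $\overline{\Delta}$ fixes $p$ and raises the cobar-length by one, $\overline{d}$ fixes both, and $\overline{h}$ deletes a letter, and with it any entry of cobar-length one, which becomes an identity and dies in $\overline{\mathcal{M}}$. For the first identity, $\pi$ vanishes off bar-length one and $\partial_{\mathsf{M},C,\mathsf{N}}$ moves $p$ by at most one, so both composites vanish for $p\geq 3$; at $p=2$ the surviving $\pi\circ\theta$-terms cancel in adjacent pairs by associativity of concatenation, matching the vanishing right-hand side, so the whole content sits at $p=1$. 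There, $\overline{d}$ on the letter that $\pi$ transports into the middle produces $d_C$ on the $C$-factor, $\pi$ applied to $\overline{\Delta}$ reproduces the two $\theta'$-terms, and the off-letter splittings together with $\overline{h}$ reproduce the module-differential expansions, while the length-zero $\theta$-terms are annihilated by $\pi$. The second identity is dual: $\alpha$ lands in the span of fully split words, and on that span the $\theta$-merging followed by the next coproduct splitting telescopes, by coassociativity of $\Delta$, onto $\alpha(\theta')$ and $\alpha(\text{id}\,\square\, d_C\,\square\,\text{id})$.

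The homotopy identity is the crux and is where I would concentrate. The map $H$ peels the first letter off the first entry and records all of its further $\overline{\Delta}$-expansions, so that in $H\circ\partial_{\mathsf{M},C,\mathsf{N}}+\partial_{\mathsf{M},C,\mathsf{N}}\circ H$ the $\theta$-terms that re-merge this letter cancel in adjacent pairs against the $\overline{\Delta}$-terms that re-split it, telescoping along the cobar-length of $a_1$. After this cancellation only two contributions survive: the fully split image $\alpha\circ\pi$, produced once the whole word has collapsed onto a single bar-entry, and $-\text{id}$, coming from the base of the telescope where $H=0$ because the first entry has cobar-length $<2$. I would run this as an induction on the cobar-length of $a_1$, treating the endpoint cases (set-like first entries, handled by the $q=0$ clause of $\pi$, and the module-action boundary terms at $m$ and $n$) separately.

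The step I expect to be the main obstacle, beyond the Koszul signs forced by the $s^{-1}$-shifts, is the curvature. Since $d_C^2\neq 0$, the operator $\partial_{\mathcal{Q}}$ is not a differential, yet the three identities must hold on the nose; combined with $\partial_{\mathsf{M},C,\mathsf{N}}^2=0$ on the bar side they force $\alpha\circ\partial_{\mathcal{Q}}^2=0$ and the analogous constraint for $\pi$, so $\pi$ and $\alpha$ must annihilate precisely the curvature output $\sum h(c')c''+c'h(c'')$ of $d_C^2$. The delicate point is thus to verify that the terms in which $\overline{h}$ deletes a letter adjacent to one being split by $H$, and those in which a length-one entry collapses to an identity killed in $\overline{\mathcal{M}}$, cancel against the telescope's boundary and against the curvature already carried by $d_C$ on the middle factor of $\mathcal{Q}$, rather than accumulating into a nonzero $d_C^2$. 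Checking that this curvature cancellation is exact has no counterpart in the classical conilpotent coHochschild setting, and is what makes the computation genuinely categorical and curved.
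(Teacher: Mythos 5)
Your overall route --- a direct verification on generators, organized by the bar-length $p$ and the cobar word-length, with telescoping cancellations and the curvature terms tracked separately --- is exactly the ``tedious but straightforward computation'' that the paper invokes without detail, and much of your bookkeeping is sound: the effect of $\theta$, $\overline{d}$, $\overline{\Delta}$, $\overline{h}$ on the two gradings, the $p\geq 3$ vanishing, the $p=2$ cancellation by associativity of the action, and the telescoping structure of the homotopy identity are all as they should be.

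However, one definite step in your outline is wrong, and it is not peripheral. In the first identity at $p=1$ you assert that ``the length-zero $\theta$-terms are annihilated by $\pi$.'' They are not: by the paper's own $q=0$ clause, $\pi$ sends a bar-length-zero generator $m[\,]n$, with $m$ and $n$ matched over a set-like element $x$, to $m \square x \square n \neq 0$, so $\pi\circ\theta$ contributes $\pm(m\cdot a_1)\square y\square n \pm m\square x\square (a_1\cdot n)$ to $\pi\circ\partial_{\mathsf{M},C,\mathsf{N}}$. These terms are essential, because on the other side the Sweedler sum in $\theta'$ runs over the full coproduct, and its extreme summands --- the components of $\Delta(c_i)$ lying in $\overline{C}\otimes C_0$ and $C_0\otimes\overline{C}$, whose $C_0$-factor lands in the \emph{middle} slot and therefore survives the twisting cochain --- produce, for each choice of middle letter $i$, the terms $\pm m\{c_1|\cdots|c_i\}\square y_i\square\{c_{i+1}|\cdots\}n$ and $\pm m\{c_1|\cdots|c_{i-1}\}\square x_i\square\{c_i|\cdots\}n$. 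Since $y_i=x_{i+1}$, these telescope across $i$, and the two surviving boundary terms are precisely $\pi$ of your ``length-zero $\theta$-terms.'' Your account discards one side of this matching and never mentions the extreme $\theta'$-summands on the other, so neither side of your verification of the first identity closes up. Nor can this be repaired by adopting the convention your claim implicitly presumes ($\pi=0$ in bar-length zero, $\theta'$ with reduced coproduct only): then the first identity holds vacuously there, but the third fails on bar-length-zero generators, where $H\circ\partial_{\mathsf{M},C,\mathsf{N}}+\partial_{\mathsf{M},C,\mathsf{N}}\circ H=0$ while $\alpha\circ\pi-\text{id}=-\text{id}$; this is also why $\alpha$ must carry $m\square x\square n$ to $m[\,]n$ rather than to zero. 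The same coaction/endpoint terms recur in the second and third identities (e.g.\ the $\theta$-terms $m\cdot\{c\}[\,]n$ produced from the leading summand of $\alpha$), so handling them correctly --- rather than the curvature, which, as you note, cancels against the $\overline{h}$-summands of the cobar differential --- is the genuinely ``many-object'' part of the computation; your outline defers it to the third identity as an ``endpoint case'' but gets it wrong where it first arises.
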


We avoid using the terminology ``chain contraction" in the above proposition precisely because $\partial_{\mathcal{Q}}$ might not square to zero. 
In any case, as our main application we consider the case $\mathsf{M}=\mathsf{N}=\mathcal{M}(\mathbf{\Omega}(C))$. In this particular case, 
\[\partial_{\mathcal{Q}} \colon \mathcal{Q}(\mathcal{M}(\mathbf{\Omega}(C)), C, \mathcal{M}(\mathbf{\Omega}(C))) \to  \mathcal{Q}(\mathcal{M}(\mathbf{\Omega}(C)), C, \mathcal{M}(\mathbf{\Omega}(C))) \]
does square to zero and so $(\mathcal{Q}(\mathcal{M}(\mathbf{\Omega}(C)), C, \mathcal{M}(\mathbf{\Omega}(C))) , \partial_{\mathcal{Q}})$ defines a dg $\mathbf{k}$-module. In fact, it follows from the definition of a categorical coalgebra that the two terms  \[ \pm m \square d_C^2 (c)\square n \]
and
\[ \pm ( m \cdot \overline{h}\{c'\} ) \square c'' \square n \pm m \square  c' \square (\overline{h}\{c''\} \cdot n)  \]
in $\partial_{\mathcal{Q}}^2(m \square c \square n)$ cancel each other.

\begin{theorem}\label{hoch-cohoch} For any categorical coalgebra $C$, there is a natural chain contraction of dg $\mathbf{k}$-modules
\[
\begin{tikzcd}
	 {\mathcal{CH}_*(\mathbf{\Omega}(C)) }\arrow[loop left, distance=3em, start anchor={[yshift=-1ex]west}, end anchor={[yshift=1ex]west}]{}{\overline{H}}& {\mathcal{coCH}_*(C).}
	\arrow["\overline{\pi}", shift left=2, from=1-1, to=1-2]
	\arrow["\overline{\alpha}", shift left=2, from=1-2, to=1-1]
\end{tikzcd}
\]
If $C$ is a $B_{\infty}$-categorical coalgebra, then there is a natural chain contraction of dg $\mathbf{k}$-modules
\[
\begin{tikzcd}
	 {\emph{Bar}_{C_0}(\mathcal{M}(\widehat{\mathbf{\Omega}}(C)), \mathbf{\Omega}(C),\mathcal{M}(\widehat{\mathbf{\Omega}}(C))) \underset{\mathcal{M}(\widehat{\mathbf{\Omega}}(C)) \otimes \mathcal{M}(\widehat{\mathbf{\Omega}}(C))^{op}}{\bigotimes} \widehat{\mathbf{\Omega}}(C)} \arrow[loop left, distance=3em, start anchor={[yshift=-1ex]west}, end anchor={[yshift=1ex]west}]{}{\widehat{H}}& {\widehat{\mathcal{coCH}}_*(C).}
	\arrow["\widehat{\pi}", shift left=2, from=1-1, to=1-2]
	\arrow["\widehat{\alpha}", shift left=2, from=1-2, to=1-1]
\end{tikzcd}
\]
\end{theorem}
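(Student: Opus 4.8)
The plan is to deduce both contractions from Proposition \ref{contraction} by specializing the modules $\mathsf{M}$ and $\mathsf{N}$ and then ``closing up'' with the operation $\underset{A \otimes A^{op}}{\bigotimes} A$ that converts a two-sided bimodule resolution into a (co)Hochschild-type complex; here and below I abbreviate $A = \mathcal{M}(\mathbf{\Omega}(C))$. I first note that the three equations of Proposition \ref{contraction} are supplemented by the easily checked identity $\pi \circ \alpha = \text{id}_{\mathcal{Q}}$ (only the length-one term of $\alpha$ survives $\pi$), so that $(\pi, \alpha, H)$ is a genuine chain contraction whenever $\partial_{\mathcal{Q}}^2 = 0$.

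For the unextended statement I would set $\mathsf{M} = \mathsf{N} = A$, so that $\text{Bar}_{C_0}(A, \mathbf{\Omega}(C), A)$ is the two-sided bar resolution and $\mathcal{Q}(A, C, A) = A \underset{C_0}{\square} C \underset{C_0}{\square} A$ is the small resolution. As recorded after Proposition \ref{contraction}, in this case $\partial_{\mathcal{Q}}^2 = 0$, the obstructing term $\pm\, m \square d_C^2(c) \square n$ being cancelled by the curvature contributions $\pm (m \cdot \overline{h}\{c'\}) \square c'' \square n \pm m \square c' \square (\overline{h}\{c''\} \cdot n)$ arising from the $A$-action. The first point to verify is that $\pi$, $\alpha$, and $H$ are morphisms of $A$-bimodules for the \emph{outer} structure (left multiplication into the $\mathsf{M}$-slot, right multiplication into the $\mathsf{N}$-slot): this is transparent from the formulas, since each map only ever multiplies reduced cobar letters into the inner sides of the $m$- and $n$-slots and leaves the outer actions untouched.

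I would then apply the additive functor $\underset{A \otimes A^{op}}{\bigotimes} A$ to the entire contraction. By functoriality the identity $\pi \circ \alpha = \text{id}$, the two chain-map equations, and the homotopy equation descend verbatim to induced maps $\overline{\pi}$, $\overline{\alpha}$, $\overline{H}$. By definition $\text{Bar}_{C_0}(A, \mathbf{\Omega}(C), A) \underset{A \otimes A^{op}}{\bigotimes} A = \mathcal{CH}_*(\mathbf{\Omega}(C))$, so it remains to identify $\mathcal{Q}(A, C, A) \underset{A \otimes A^{op}}{\bigotimes} A$ with $\mathcal{coCH}_*(C)$. Unwinding the coequalizer presentation (\ref{tensorproduct}) of the closing-up collapses the two outer $A$-factors of $A \square_{C_0} C \square_{C_0} A$ into a single cyclic cobar slot, producing exactly the module $C \square_{C_0 \otimes C_0^{op}} A$ underlying $\mathcal{coCH}_*(C)$. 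One then checks that the descended differential reproduces all six summands of the coHochschild differential $\partial$: the differential $d_C$ on the middle factor yields the $d_C x_0$ term; the internal cobar differential on the surviving $A$-factor unfolds, via $\overline{d}$, $\overline{h}$, and $\overline{\Delta}$, into the $d_C x_i$, $\overline{h}(x_i)$, and interior comultiplication terms; and the $\theta'$ piece, after the cyclic identification, supplies the two extreme comultiplication terms $x_0'\{x_0''|\cdots\}$ and $x_0''\{\cdots|x_0'\}$. This sign-sensitive matching of differentials is the most laborious step, but it is purely bookkeeping.

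For the extended statement I would run the identical argument with $\mathsf{M} = \mathsf{N} = \mathcal{M}(\widehat{\mathbf{\Omega}}(C))$, regarded as left and right $A$-modules by restriction along the localization functor $\mathbf{\Omega}(C) \to \widehat{\mathbf{\Omega}}(C)$; Proposition \ref{contraction} applies to these modules as stated, and the same cancellation gives $\partial_{\mathcal{Q}}^2 = 0$ (note $\overline{h}\{c'\} \in C_0$ acts identically before and after localization). The essential new observation is that $\mathcal{M}(\widehat{\mathbf{\Omega}}(C))$ carries two commuting structures: an \emph{outer} $\mathcal{M}(\widehat{\mathbf{\Omega}}(C))$-bimodule structure from its own multiplication, and an \emph{inner} $A$-module structure through which $\pi$, $\alpha$, and $H$ act. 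Because these commute, the three maps are $\mathcal{M}(\widehat{\mathbf{\Omega}}(C))$-bimodule morphisms for the outer structure, so I may close up with $\underset{\mathcal{M}(\widehat{\mathbf{\Omega}}(C)) \otimes \mathcal{M}(\widehat{\mathbf{\Omega}}(C))^{op}}{\bigotimes} \widehat{\mathbf{\Omega}}(C)$ in place of $\underset{A \otimes A^{op}}{\bigotimes} A$; the bar side then becomes the complex displayed in the theorem, while the same coequalizer computation identifies the $\mathcal{Q}$ side with $C \square_{C_0 \otimes C_0^{op}} \mathcal{M}(\widehat{\mathbf{\Omega}}(C)) = \widehat{\mathcal{coCH}}_*(C)$. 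I expect the main obstacle to lie precisely here: confirming that the inner and outer actions commute so that the closing-up along the localized algebra is legitimate, together with the on-the-nose (signs included) matching of $\partial_{\mathcal{Q}}$ with the coHochschild differential after closing up.
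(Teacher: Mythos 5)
Your proposal is correct and follows essentially the same route as the paper: both deduce the theorem from Proposition \ref{contraction} by specializing to $\mathsf{M} = \mathsf{N} = \mathcal{M}(\mathbf{\Omega}(C))$ (resp. $\mathcal{M}(\widehat{\mathbf{\Omega}}(C))$ in the extended case), observing that the maps descend under the closing-up functor $\underset{A \otimes A^{op}}{\bigotimes} A$, and identifying the descended $\mathcal{Q}$-side with $\mathcal{coCH}_*(C)$ (resp. $\widehat{\mathcal{coCH}}_*(C)$). Your additional check that $\pi \circ \alpha = \mathrm{id}$ and your explicit handling of the localized case simply spell out details the paper compresses into its definitions of $\overline{\pi}$, $\overline{\alpha}$, $\overline{H}$ and its remark that ``the proof of the second statement is similar.''
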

\begin{proof}
Recall \[ \mathcal{CH}_*(\mathbf{\Omega}(C))= \text{Bar}_{C_0}(\mathcal{M}(\mathbf{\Omega}(C)), \mathbf{\Omega}(C), \mathcal{M}(\mathbf{\Omega}(C))) 
\underset{\mathcal{M}(\mathbf{\Omega}(C)) \otimes \mathcal{M}(\mathbf{\Omega}(C))^{op}}{\bigotimes} 
\mathcal{M}(\mathbf{\Omega}(C)) .\]
Now we observe there is a natural isomorphism of dg $\mathbf{k}$-modules
\[ \mathcal{coCH}_*(C) \cong \mathcal{Q}(\mathcal{M}(\mathbf{\Omega}(C)), C, \mathcal{M}(\mathbf{\Omega}(C))) \underset{\mathcal{M}(\mathbf{\Omega}(C)) \otimes \mathcal{M}(\mathbf{\Omega}(C))^{op}}{\bigotimes} \mathcal{M}(\mathbf{\Omega}(C)).
\]

Using the notation of Proposition \ref{contraction}, define
\[ \overline{\pi}= \pi \underset{\mathcal{M}(\mathbf{\Omega}(C)) \otimes \mathcal{M}(\mathbf{\Omega}(C))^{op}}{\bigotimes} \text{id}_{\mathcal{M}(\mathbf{\Omega}(C))}, \]
\[\overline{\alpha}= \alpha \underset{\mathcal{M}(\mathbf{\Omega}(C)) \otimes \mathcal{M}(\mathbf{\Omega}(C))^{op}}{\bigotimes} \text{id}_{\mathcal{M}(\mathbf{\Omega}(C))}, \text{ and }\]
\[\overline{H}= H \underset{\mathcal{M}(\mathbf{\Omega}(C)) \otimes \mathcal{M}(\mathbf{\Omega}(C))^{op}}{\bigotimes} \text{id}_{\mathcal{M}(\mathbf{\Omega}(C))}.\]
It follows from Proposition \ref{contraction} that  $\overline{\pi}$, $\overline{\alpha}$ and $\overline{H}$ define the data of a natural chain contraction. The proof of the second statement is similar.
\end{proof}

\subsection{Invariance of the coHochschild complex} 
Recall a morphism of dg categories $f \colon \mathsf{A} \to \mathsf{A}'$ is a \textit{quasi-equivalence} if
\begin{enumerate}
\item for any two objects $x$ and $y$ in $\mathsf{A}$, the induced map $f_{x,y} \colon \mathsf{A}(x,y) \to \mathsf{A}'(f(x), f(y))$ is a quasi-isomorphism of dg $\mathbf{k}$-modules, and
\item if the induced map on homotopy categories $H_0(f) \colon H_0(\mathsf{A}) \to H_0(\mathsf{A}')$ is essentially surjective.
\end{enumerate}
\begin{definition}
    A map $f\colon C \to C'$ between categorical coalgebras is called an $\mathbf{\Omega}$-\textit{quasi-equivalence} if $\mathbf{\Omega}(f)\colon \mathbf{\Omega}(C) \to \mathbf{\Omega}(C')$ is a quasi-equivalence of dg categories. 
\end{definition}
The coHochschild complex is invariant under $\mathbf{\Omega}$-quasi-equivalences as shown next.
\begin{proposition}
If $f \colon C \to C' $ is an $\mathbf{\Omega}$-quasi-equivalence between categorical coalgebras $C$ and $C'$, then
\[\mathcal{coCH}_*(f) \colon \mathcal{coCH}_*(C)\to \mathcal{coCH}_*(C')\] 
is a quasi-isomorphism of dg $\mathbf{k}$-modules. 
\end{proposition}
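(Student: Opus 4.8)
The plan is to reduce the statement about the coHochschild complexes to a statement about the Hochschild complexes of the associated cobar dg categories, where invariance under quasi-equivalence is classical, and to transport the conclusion back using the chain contraction established in Theorem \ref{hoch-cohoch}. Concretely, Theorem \ref{hoch-cohoch} provides, for each categorical coalgebra, a natural chain contraction between $\mathcal{CH}_*(\mathbf{\Omega}(C))$ and $\mathcal{coCH}_*(C)$; naturality means that the maps $\overline{\pi}$, $\overline{\alpha}$ fit into a commuting square with the map induced by $f$. Since a chain contraction exhibits $\mathcal{coCH}_*(C)$ as a deformation retract of $\mathcal{CH}_*(\mathbf{\Omega}(C))$, in particular $\overline{\alpha}$ and $\overline{\pi}$ are mutually inverse quasi-isomorphisms. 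Hence $\mathcal{coCH}_*(f)$ is a quasi-isomorphism if and only if $\mathcal{CH}_*(\mathbf{\Omega}(f))$ is one, and it suffices to prove the latter.

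First I would set up the reduction precisely. Given the $\mathbf{\Omega}$-quasi-equivalence $f\colon C \to C'$, by definition $\mathbf{\Omega}(f)\colon \mathbf{\Omega}(C)\to\mathbf{\Omega}(C')$ is a quasi-equivalence of dg categories. Using the naturality of the chain contraction of Theorem \ref{hoch-cohoch}, I would form the diagram relating $\mathcal{coCH}_*(f)$ and $\mathcal{CH}_*(\mathbf{\Omega}(f))$ through $\overline{\alpha}$ and $\overline{\pi}$, and invoke the two-out-of-three property of quasi-isomorphisms: since the horizontal maps $\overline{\pi}$ (or $\overline{\alpha}$) on each side are quasi-isomorphisms, the vertical map $\mathcal{coCH}_*(f)$ is a quasi-isomorphism exactly when $\mathcal{CH}_*(\mathbf{\Omega}(f))$ is. This cleanly isolates the remaining content.

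The remaining step is the invariance of the Hochschild complex of a dg category under quasi-equivalence. This is the classical fact that Hochschild homology is a Morita-invariant (in particular quasi-equivalence-invariant) construction on $\mathsf{dgCat}_{\mathbf{k}}$; I would cite the standard references on the homotopy theory of dg categories (e.g. \cite{tabuada1}, \cite{tabuada2}, \cite{toen}) for the statement that a quasi-equivalence $\mathbf{\Omega}(C)\to\mathbf{\Omega}(C')$ induces a quasi-isomorphism on Hochschild complexes. Here the flatness hypothesis on $C$ and $C'$ as $\mathbf{k}$-modules is what I would use to guarantee that $\mathbf{\Omega}(C)$ and $\mathbf{\Omega}(C')$ are locally $\mathbf{k}$-flat (termwise flat over $\mathbf{k}$), so that the bar construction computes the derived tensor product and Hochschild homology is homotopy invariant rather than merely strictly functorial; without flatness the naive two-sided bar complex need not be the correct derived object.

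The step I expect to be the main obstacle is making the invariance of $\mathcal{CH}_*$ fully rigorous in this \emph{many-object} and non-negatively graded setting, rather than merely citing the single-object case. The subtlety is that a quasi-equivalence, unlike a mere termwise quasi-isomorphism, is allowed to be essentially surjective only up to homotopy, so one cannot argue object-by-object; one must instead factor $\mathbf{\Omega}(f)$ through a fully faithful inclusion and an equivalence onto the essential image, verify that $\mathcal{CH}_*$ sends each to a quasi-isomorphism (the inclusion by a cofinality/Morita argument, the equivalence by transport of structure), and confirm that the flatness hypothesis survives each factor. I would handle this by reducing to the known invariance of Hochschild homology for locally flat dg categories under quasi-equivalences, emphasizing that the cobar categories are concentrated in non-negative degrees and locally flat, so that all the hypotheses of the cited invariance results are met.
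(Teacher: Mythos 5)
Your proposal is correct and follows essentially the same route as the paper: both use the naturality of the chain contraction from Theorem \ref{hoch-cohoch} to transfer the problem to the Hochschild complexes of the cobar dg categories, and both invoke the classical invariance of the Hochschild complex under quasi-equivalences of locally $\mathbf{k}$-flat dg categories, with the flatness hypothesis on $C$ and $C'$ serving exactly to guarantee that $\mathbf{\Omega}(C)$ and $\mathbf{\Omega}(C')$ are locally $\mathbf{k}$-flat. Your additional care about naturality squares, two-out-of-three, and the many-object subtleties is a more explicit rendering of what the paper leaves implicit, not a different argument.
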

\begin{proof}
Since $C$ and $C'$ are flat as $\mathbf{k}$-modules, the dg categories $\mathbf{\Omega}(C)$ and $\mathbf{\Omega}(C')$ are locally $\mathbf{k}$-flat.  
Hence, the quasi-equivalence
\[\mathbf{\Omega}(f)\colon \mathbf{\Omega}(C) \to \mathbf{\Omega}(C')\] induces a quasi-isomorphism between Hochschild complexes
\[ \mathcal{CH}_*(\mathbf{\Omega}(f)) \colon \mathcal{CH}_*(\mathbf{\Omega}(C)) \to \mathcal{CH}_*(\mathbf{\Omega}(C')).
\]By Corollary \ref{hoch-cohoch}, we have natural quasi-isomorphisms
\[\mathcal{CH}_*(\mathbf{\Omega}(C))\simeq \mathcal{coCH}_*(C) \]
and 
\[\mathcal{CH}_*(\mathbf{\Omega}(C'))\simeq \mathcal{coCH}_*(C').\]
It follows that the induced map
\[\mathcal{coCH}_*(f) \colon \mathcal{coCH}_*(C) \to \mathcal{coCH}_*(C')\] is a quasi-isomorphism. 
\end{proof}

\section{The coHochschild complex as a model for the free loop space}
We establish a relationship between the extended coHochschild complex and the free loop space. For any topological space $Y$, we denote by $LY$ the \textit{free loop space} of $Y$ modeled as the space (with compact-open topology) of pairs $(r,\gamma)$ where $r$ is a non-negative real number and $\gamma \colon [0,r] \to Y$ a continous map with $\gamma(0)=\gamma(r)$. Denote by $C_*^{\text{sing}}(LY)$  the dg $\mathbf{k}$-module of normalized singular chains on $LY$. Let $\mathbf{k}$ be a commutative ring and for any simplicial set $X$ denote by $C_*(X)$ its $B_{\infty}$-categorical $\mathbf{k}$-coalgebra of chains. Note that the $S^1$-action on $L|X|$ given by rotating loops gives rise to an operator $R: C_*^{\text{sing}}(L|X|) \to C_{*+1}^{\text{sing}}(L|X|)$ which gives the chain complex $C_*^{\text{sing}}(L|X|)$ the extra structure of a mixed complex.

\begin{theorem} For any simplicial set $X$, the dg $\mathbf{k}$-modules
$\widehat{\mathcal{coCH}}_*(C_*(X))$ and $C_*^{\emph{sing}}(L|X|)$ are naturally quasi-isomorphic. 
\end{theorem}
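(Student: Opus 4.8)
The plan is to connect $\widehat{\mathcal{coCH}}_*(C_*(X))$ to the Hochschild complex of the dg path category $C_*^{\text{sing}}(\mathcal{P}|X|)$ and then to invoke the theorem of Goodwillie \cite{Goodwillie} and of Burghelea--Fedorowicz \cite{BF} in its many-object form. Throughout I set $A = \mathbf{\Omega}(C_*(X))$ and $\widehat{A} = \widehat{\mathbf{\Omega}}(C_*(X))$, and recall that formal inversion of morphisms does not change the object set, so $A_0 = \widehat{A}_0 = C_0$. The theorem only asserts a quasi-isomorphism of dg $\mathbf{k}$-modules, so I will not attempt to match the Connes-type operator $P$ with the rotation operator $R$.

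First I would apply the second chain contraction of Theorem \ref{hoch-cohoch}, which produces a natural quasi-isomorphism between $\widehat{\mathcal{coCH}}_*(C_*(X))$ and the relative complex
\[
B := \text{Bar}_{C_0}\big(\mathcal{M}(\widehat{A}), A, \mathcal{M}(\widehat{A})\big) \underset{\mathcal{M}(\widehat{A}) \otimes \mathcal{M}(\widehat{A})^{op}}{\bigotimes} \mathcal{M}(\widehat{A}).
\]
The next step is to identify $B$ with the genuine Hochschild complex $\mathcal{CH}_*(\widehat{A})$. The only discrepancy between the two is that $B$ uses $A$ in the middle slot of the bar resolution while $\mathcal{CH}_*(\widehat{A})$ uses $\widehat{A}$; both are resolutions of $\mathcal{M}(\widehat{A})$ as an $\widehat{A}$-bimodule, computing $\mathcal{M}(\widehat{A}) \otimes^{\mathbb{L}}_A \mathcal{M}(\widehat{A})$ and $\mathcal{M}(\widehat{A}) \otimes^{\mathbb{L}}_{\widehat{A}} \mathcal{M}(\widehat{A})$ respectively. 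The comparison map between them, induced by the localization $A \to \widehat{A}$, is a quasi-isomorphism precisely because $A \to \widehat{A}$ is a homotopy epimorphism, i.e. $\widehat{A} \otimes^{\mathbb{L}}_A \widehat{A} \to \widehat{A}$ is a quasi-isomorphism. This is a general feature of derived localizations of dg categories and holds here under the cofibrancy and local flatness hypotheses of Remark \ref{derivedloc}, which are verified for $C_*(X)$ in the proof of Theorem \ref{pathcategory}. Hence $B \simeq \mathcal{CH}_*(\widehat{A})$.

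Next I would transport the computation to a topological model. By Theorem \ref{pathcategory} the dg category $\widehat{A} = \widehat{\mathbf{\Omega}}(C_*(X))$ is naturally quasi-equivalent to $C_*^{\text{sing}}(\mathcal{P}|X|)$, and since both are locally $\mathbf{k}$-flat, the Hochschild complex sends this quasi-equivalence to a quasi-isomorphism (this is the same invariance used in the preceding invariance proposition). Thus $\mathcal{CH}_*(\widehat{A}) \simeq \mathcal{CH}_*(C_*^{\text{sing}}(\mathcal{P}|X|))$. It then remains to prove the topological identification $\mathcal{CH}_*(C_*^{\text{sing}}(\mathcal{P}|X|)) \simeq C_*^{\text{sing}}(L|X|)$. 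Both sides decompose as direct sums indexed by the path components of $|X|$: on the algebraic side because the Hochschild complex of a dg category splits over the isomorphism classes of objects in its homotopy category, and on the topological side because $L|X| = \coprod_\alpha L|X|_\alpha$. On a single component $Y_\alpha$ with basepoint $b_\alpha$, any two points are joined by a path, so all objects of $C_*^{\text{sing}}(\mathcal{P}Y_\alpha)$ are isomorphic in the homotopy category; the full dg subcategory is therefore quasi-equivalent to the one-object Pontryagin dg algebra $C_*^{\text{sing}}(\Omega_{b_\alpha}Y_\alpha)$. By invariance of Hochschild homology and the Goodwillie--Burghelea--Fedorowicz theorem, $\mathcal{CH}_*(C_*^{\text{sing}}(\mathcal{P}Y_\alpha)) \simeq \mathcal{CH}_*(C_*^{\text{sing}}(\Omega_{b_\alpha}Y_\alpha)) \simeq C_*^{\text{sing}}(LY_\alpha)$, and summing over $\alpha$ and composing the chain of natural quasi-isomorphisms gives the result.

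I expect the last step to be the main obstacle: it is exactly where the classical theorem of \cite{Goodwillie} and \cite{BF} enters, and care is required both to reduce the non-path-connected case to the connected one via the component decomposition and to check that every identification — the component splitting of Hochschild homology, the Morita-type collapse to the based loop algebra, and the comparison with $C_*^{\text{sing}}(L|X|)$ — is natural in $X$, so that the whole zig-zag assembles into a single natural quasi-isomorphism.
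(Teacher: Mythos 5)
Your proposal is correct and follows the paper's own route step for step: the contraction of Theorem \ref{hoch-cohoch}, the replacement of $\mathbf{\Omega}(C_*(X))$ by $\widehat{\mathbf{\Omega}}(C_*(X))$ in the middle slot of the bar construction, the passage to $\mathcal{CH}_*(C_*^{\text{sing}}(\mathcal{P}|X|))$ via Theorem \ref{pathcategory} and flat invariance of the Hochschild complex, and the final appeal to \cite{Goodwillie} and \cite{BF}. The only substantive divergence is in how the two citations are handled. For the middle-slot comparison the paper cites Section Q5 of \cite{QuillenAp}; this is the same homotopy-epimorphism property of the derived localization $\mathbf{\Omega}(C_*(X)) \to \widehat{\mathbf{\Omega}}(C_*(X))$ that you invoke, so that step is a matter of phrasing. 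For the last step, however, the paper quotes the Goodwillie--Burghelea--Fedorowicz comparison directly in a many-object, basepoint-free form, as a natural quasi-isomorphism $\mathcal{CH}_*(C_*^{\text{sing}}(\mathcal{P}|X|)) \to C_*^{\text{sing}}(L|X|)$, whereas you derive it from the classical pointed path-connected statement by splitting both sides over path components and collapsing each component's path category onto the Pontryagin algebra $C_*^{\text{sing}}(\Omega_{b_\alpha}Y_\alpha)$. Your reduction is mathematically sound: the Hochschild complex of the path category does split over components, since a cyclic word of composable paths stays in one component, and the one-object inclusion is a quasi-equivalence between locally $\mathbf{k}$-flat dg categories. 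But it is exactly where the naturality you flag becomes a genuine issue, because the basepoints $b_\alpha$ cannot be chosen naturally in $X$; your zig-zag is then only natural up to homotopy. This is precisely why the paper stays with the path category throughout and uses the form of the theorem in which the comparison map is defined on the cyclic bar construction of $\mathcal{P}|X|$ itself, with no choices; to obtain the strict naturality asserted in the statement you would need to do the same, or else verify that the homotopies relating your basepoint choices assemble coherently.
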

\begin{proof}
For simplicity write $C=C_*(X)$. By Theorem \ref{hoch-cohoch}, there is a natural chain homotopy equivalence
\[\widehat{\mathcal{coCH}}_*(C) \simeq \text{Bar}_{C_0}(\mathcal{M}(\widehat{\mathbf{\Omega}}(C)), \mathbf{\Omega}(C),\mathcal{M}(\widehat{\mathbf{\Omega}}(C))) \underset{\mathcal{M}(\widehat{\mathbf{\Omega}}(C)) \otimes \mathcal{M}(\widehat{\mathbf{\Omega}}(C))^{op}}{\bigotimes} \widehat{\mathbf{\Omega}}(C).\]
Note that $\mathbf{\Omega}(C)$ is a cofibrant dg category being naturally isomorphic to $\mathbf{\Lambda}(X)$ where $\mathbf{\Lambda} \colon \mathsf{sSet} \to \mathsf{dgCat}_{\mathbf{k}}$ denotes the left adjoint of the dg nerve functor. Hence, the natural map \[\mathbf{\Omega}(C) \cong \mathbf{\Lambda}(X) \to \mathbf{\Lambda}(X)[X_1^{-1}] \cong \widehat{\mathbf{\Omega}}(C)\] induces a quasi-isomorphism after applying bar constructions. This is a classical fact, for instance see Section Q5 in \cite{QuillenAp}. More precisely, the natural map
\[\text{Bar}_{C_0}(\mathcal{M}(\widehat{\mathbf{\Omega}}(C)), \mathbf{\Omega}(C),\mathcal{M}(\widehat{\mathbf{\Omega}}(C))) \to \text{Bar}_{C_0}(\mathcal{M}(\widehat{\mathbf{\Omega}}(C)), \widehat{\mathbf{\Omega}}(C),\mathcal{M}(\widehat{\mathbf{\Omega}}(C)))\] is a quasi-isomorphism.
Consequently, we have a natural quasi-isomorphism 
\[ \widehat{\mathcal{coCH}}_*(C) \xrightarrow{\simeq}
\text{Bar}_{C_0}(\mathcal{M}(\widehat{\mathbf{\Omega}}(C)), \widehat{\mathbf{\Omega}}(C),\mathcal{M}(\widehat{\mathbf{\Omega}}(C))) \underset{\mathcal{M}(\widehat{\mathbf{\Omega}}(C)) \otimes \mathcal{M}(\widehat{\mathbf{\Omega}}(C))^{op}}{\bigotimes} \widehat{\mathbf{\Omega}}(C)\] 
\[ = \mathcal{CH}_*(\widehat{\mathbf{\Omega}}(C)).\]

The invariance of the Hochschild complex with respect to quasi-equivalences between locally $\mathbf{k}$-flat dg categories, together with Theorem \ref{pathcategory}, implies that $\mathcal{CH}_*(\widehat{\mathbf{\Omega}}(C))$ is naturally quasi-isomorphic to  $\mathcal{CH}_*(C_*^{\text{sing}}(\mathcal{P}|X|))$. By a result proved by Goodwillie  \cite{Goodwillie} and also (independently) by Burghelea and Fiedorowicz \cite{BF}, there is a natural quasi-isomorphism \[\mathcal{CH}_*(C_*^{\text{sing}}(\mathcal{P}|X|))\xrightarrow{\simeq} C_*^{\text{sing}}(L|X|),\] as desired. 
\end{proof}

\begin{remark}
If the simplicial set $X$ has the property of being ``group-like", namely, if the homotopy category of $X$ is a groupoid, then the (non-extended) coHochschild complex $\mathcal{coCH}_*(C_*(X))$ of the underlying categorical coalgebra of $C_*(X)$ is already naturally quasi-isomorphic to $C^{\text{sing}}_*(L|X|)$. In other words, there is no need to localize if every $1$-simplex in $X$ is invertible up to homotopy. In this case,
$\mathcal{coCH}_*(C_*(X))$ and $C^{\text{sing}}_*(L|X|)$ are quasi-isomorphic as mixed complexes. This follows since the quasi-isomorphism $\overline{\pi}$ in Theorem \ref{hoch-cohoch} is a morphism of mixed complexes (i.e. it intertwines the operators $B$ and $P$) together with the fact that the quasi-isomorphism 
\[\mathcal{CH}_*(C_*^{\text{sing}}(\mathcal{P}|X|))\xrightarrow{\simeq} C_*^{\text{sing}}(L|X|)\] constructed in \cite{Goodwillie} and \cite{BF} is also a morphism of mixed complexes (i.e. intertwines the operators $B$ and $R$). A theory of cyclic homology for categorical coalgebras will be developed by Daniel Tolosa in his PhD thesis. 
\end{remark}

\section{dg Hopf algebras and the adjoint action}
This section has two goals. The first goal is to clarify the relationship between the coHochschild complex model for the free loop space and Brown's twisted tensor product model for the total space of a fibration. The latter uses the conjugation action of (a topological group model of) the based loop space on itself when defining the twisted differential, while the first does not use any antipode map (or inverses) when defining the coHochschild complex differential.

The second goal is to describe a natural chain map
\[C \to \widehat{\mathcal{coCH}}_*(C)\]
modeling the continuous map $Y \to LY$ sending each point of a space $Y$ to its corresponding constant loop in the free loop space. We expect this to be useful in string topology of non-simply connected manifolds, where constant loops play a delicate role. 

For simplicity, in this section we work with connected dg $\mathbf{k}$-coalgebras instead of categorical $\mathbf{k}$-coalgebras, namely, we work in the one object case. For homotopy theoretic applications this is not a strong hypothesis since pointed connected homotopy types may be modeled by reduced simplicial sets (i.e. simplicial sets with a single vertex). For any reduced simplicial set $X$, we denote by $C_*(X)$ the normalized simplicial chains on $X$, which is a connected dg coalgebra when equipped with the Alexander-Whitney coproduct. It follows from Section \ref{chainsccoalg} that the dg algebra (or dg category with one object) $\widehat{\mathbf{\Omega}}(C_*(X))$ has a natural dg bialgebra structure naturally quasi-isomorphic to the dg bialgebra structure on the singular chain complex $C^{\text{sing}}_*(\Omega_b|X|)$ of based (Moore) loops in $|X|$ at $b$. One of the main technical steps in this section is showing that the dg bialgebra $\widehat{\mathbf{\Omega}}(C_*(X))$ has the property of being a dg Hopf algebra. This means there is a map of dg $\mathbf{k}$-modules
\[s \colon \widehat{\mathbf{\Omega}}(C_*(X)) \to \widehat{\mathbf{\Omega}}(C_*(X))\]
satisfying
\begin{eqnarray} \label{antipode} \mu \circ (s \otimes \text{id}) \circ \nabla = \eta \circ \varepsilon= \mu \circ (\text{id} \otimes s) \circ \nabla,
\end{eqnarray}
where $\nabla \colon \widehat{\mathbf{\Omega}}(C_*(X)) \to \widehat{\mathbf{\Omega}}(C_*(X)) \otimes \widehat{\mathbf{\Omega}}(C_*(X))$ is the coproduct, $\mu \colon \widehat{\mathbf{\Omega}}(C_*(X)) \otimes \widehat{\mathbf{\Omega}}(C_*(X)) \to \widehat{\mathbf{\Omega}}(C_*(X))$ the product, $\varepsilon \colon \widehat{\mathbf{\Omega}}(C_*(X)) \to \mathbf{k}$ the counit, and $\eta \colon \mathbf{k} \to \widehat{\mathbf{\Omega}}(C_*(X))$ the unit. 

\subsection{Hochschild chain complex and adjoint action}\label{Hochschildadjoint}

Given a map $f \colon A \to B$ of dg $\mathbf{k}$-algebras,  denote by $f^*B$ the left dg $A \otimes A^{op}$-module whose underlying dg $\mathbf{k}$-module is $B$ and the $A \otimes A^{op}$-right action is induced through $f$. Denote by
\[\mathcal{CH}_*(A,f^*B) := \text{Bar}(A,A,A) \otimes_{A \otimes A^{op}} f^*B \]
the (normalized) Hochschild chain complex of $A$ with coefficients in $f^*(B)$. This has \[ \bigoplus_{p=0}^{\infty} (s^{+1}\overline{A})^{\otimes p} \otimes f^*B \]as underlying graded $\mathbf{k}$-module and we write generators $[a_1\big| \cdots \big|a_p]b$ as usual.

Now suppose $f: A\to B$ is a map of dg $\mathbf{k}$-\textit{bialgebras}. Furthermore, suppose the dg bialgebra $B$ has the property of being a dg Hopf algebra. Denote by $f^*_{ad}(B)$ the left $A$-module whose underlying dg $\mathbf{k}$-module is $B$ and left $A$-action given by
\[ a \cdot b= \sum_{(a)} (-1)^{|a'|(|a''|+|b|)}f(a'')bs(f(a')),\]
for any $a\in A$, $b \in B$, where $s: B \to B$ denotes the antipode of $B$. We call this the \textit{adjoint action of $A$ on $B$ via $f$.}

The counit $\epsilon_A: A \to \mathbf{k}$ of the dg Hopf algebra $A$ makes $\mathbf{k}$ into a right dg $A$-bimodule. Define a dg $\mathbf{k}$-module
\[\mathcal{C}_*(A, f^*_{ad}B) := \text{Bar}(\mathbf{k}, A, f^*_{ad}B) .\]

\begin{proposition} \label{HochschildHopfalgebra} Let $f: A \to B$ be a dg $\mathbf{k}$-bialgebra map, where $B$ has the property of being a dg Hopf algebra. There is a natural isomorphism of dg $\mathbf{k}$-modules
\[\mathcal{CH}_*(A,f^*B) \cong \mathcal{C}_*(A, f^*_{ad}B).\]
\end{proposition}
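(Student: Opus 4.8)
The plan is to produce an explicit degree-preserving $\mathbf{k}$-linear isomorphism between the two complexes and check that it intertwines the differentials, with the antipode of $B$ entering precisely to supply the inverse. First I would note that the two complexes have \emph{the same} underlying graded $\mathbf{k}$-module, namely $\bigoplus_{p\geq 0}(s^{+1}\overline{A})^{\otimes p}\otimes B$ with generators $[a_1|\cdots|a_p]b$: on the Hochschild side this is because the free $A\otimes A^{op}$-bimodule structure of $\text{Bar}(A,A,A)$ collapses the two outer copies of $A$ onto $f^*B$, while on the other side it is the definition of $\text{Bar}(\mathbf{k},A,f^*_{ad}B)$. Then I would write out both differentials and observe that the internal parts (from $d_A$ and $d_B$) and all the inner multiplication terms $[a_1|\cdots|a_ia_{i+1}|\cdots|a_p]b$ coincide verbatim. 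The two differentials differ \emph{only} at the two extreme faces: on $\mathcal{CH}_*(A,f^*B)$ these are the right action $[a_1|\cdots|a_p]b \mapsto \pm[a_2|\cdots|a_p](b\,f(a_1))$ and the cyclic left action $\pm[a_1|\cdots|a_{p-1}](f(a_p)\,b)$; on $\mathcal{C}_*(A,f^*_{ad}B)$ the first face vanishes (since $\overline{A}=\ker\epsilon$) and the last is conjugation $\pm[a_1|\cdots|a_{p-1}](a_p\cdot b)$ by the adjoint action. Thus the entire content of the proposition is a twisting isomorphism that trades the two-sided action for the one-sided conjugation action.

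Second, I would construct this isomorphism using the coproduct of $A$, the map $f$, and the antipode $s$ of $B$. The model is the classical identification of the cyclic bar construction of a group $G$ with the bar construction of its conjugation action (the Borel construction), which on the group algebra reads $[g_1|\cdots|g_p]h\mapsto [g_1|\cdots|g_p](h\,g_1\cdots g_p)$. Its Hopf-algebraic lift has the shape
\[
\Phi([a_1|\cdots|a_p]b)=\sum \pm\,[a_1^{(1)}|\cdots|a_p^{(1)}]\bigl(b\,f(a_1^{(2)}\cdots a_p^{(2)})\bigr),
\]
where the coproduct of $A$ splits each bar entry into a copy that remains in the bar and a copy that is pushed (via $f$) into the coefficient, the exact distribution of the iterated coproduct and the Koszul signs being fixed by the requirement below. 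The inverse map twists instead by $s\circ f$, and the two compositions collapse to the identity by the antipode axioms, cf.\ (\ref{antipode}); this is the one and only place the Hopf (as opposed to merely bialgebra) hypothesis on $B$ is used, so invertibility of $\Phi$ is genuinely equivalent to the existence of the antipode.

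Third comes the verification that $\Phi$ is a chain map. Compatibility with the internal differentials and with the inner multiplication faces is routine: it uses only that $f$ is an algebra map and the coassociativity of $\Delta_A$, together with the fact that $f$ is a coalgebra map, i.e.\ $\Delta_B\circ f=(f\otimes f)\circ\Delta_A$, so that the coproduct twist commutes past concatenation. Naturality in $f$ is immediate, since $\Phi$ is assembled from $\Delta_A$, $f$, and $s$. The main obstacle — and the part that deserves care — is matching the extreme faces: one must show that applying $\Phi$ to the Hochschild $d_0$ term (which is nonzero) and to the cyclic $d_p$ term produces exactly the single conjugation face of $\mathcal{C}_*(A,f^*_{ad}B)$, with $d_0$ being absorbed by the twist. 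This is where the antipode identity $\mu\circ(s\otimes\mathrm{id})\circ\nabla=\eta\circ\epsilon$ must be made to apply, and the delicate point is arranging the iterated coproduct of the last entry $a_p$ so that the surplus factors sit adjacently and collapse under the antipode; the grouplike case hides this because all coproduct copies agree. Getting this coproduct bookkeeping and the accompanying signs right is the crux, and I expect it to be exactly the ``tedious but straightforward'' computation analogous to Proposition~\ref{contraction}.
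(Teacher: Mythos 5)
Your proposal is correct and follows essentially the same route as the paper: the paper's proof simply writes down the twisting map $\varphi([a_1|\cdots|a_p]b)=\sum\pm[a_1''|\cdots|a_p'']\,b\,f(a_1'\cdots a_p')$ with strict inverse given by twisting with $s\circ f$, exactly your $\Phi$, and invokes the antipode identity (\ref{antipode}) for both the chain-map property and invertibility. The only point you deferred (``fixed by the requirement below'') is resolved oppositely to your default: with the paper's adjoint action $a\cdot b=\sum\pm f(a'')\,b\,s(f(a'))$, it is the \emph{second} Sweedler components that must remain in the bar and the first components that are pushed into the coefficient, so that the surplus factors sit adjacently and collapse under the standard (rather than skew) antipode axiom.
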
 

\begin{proof}
Define  
\begin{eqnarray}
\varphi \colon \mathcal{CH}_*(A,f^*B)  \to  \mathcal{C}_*(A, f^*_{ad}B)\\
\varphi([a_1 \big| \cdots \big| a_p]b) := \sum \pm [a_1''\big| \cdots \big|a_p'']bf(a_1' \cdots a_p').
\end{eqnarray}
The antipode compatibility (equation \ref{antipode}) in the definition of a dg Hopf algebra implies that $\varphi$ is a chain map with (strict) inverse given by the chain map
\begin{eqnarray} 
\varphi^{-1}([a_1 \big| \cdots \big| a_p] b )= \sum \pm [a_1'' \big| \cdots \big| a_p'']bs(f(a_1' \cdots a_p')),
\end{eqnarray}
where $s: B \to B$ denotes the antipode of $B$.
\end{proof}
\subsection{The cobar construction as a dg Hopf algebra}
We wish to apply the above discussion to a dg Hopf algebra model for the based loop space. Recall that, as discussed in \ref{chainsccoalg}, for any reduced simplicial set $X$, there is a natural coproduct \[\nabla \colon \widehat{\mathbf{\Omega}}(C_*(X)) \to \widehat{\mathbf{\Omega}}(C_*(X)) \otimes \widehat{\mathbf{\Omega}}(C_*(X))\]
making $(\widehat{\mathbf{\Omega}}(C_*(X)), D, \otimes, \nabla)$ into a dg bialgebra, which turns out to be a dg Hopf algebra, as shown next.
\begin{theorem} \label{cobarhopf} For any reduced simplicial set $X$, the dg bialgebra $(\widehat{\mathbf{\Omega}}(C_*(X)), D, \otimes, \nabla)$ has the property of being a dg Hopf algebra. 
\end{theorem}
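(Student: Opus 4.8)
The plan is to produce an antipode by hand, exploiting the fact that the localization at the set-like elements has turned the degree zero part of $\widehat{\mathbf{\Omega}}(C_*(X))$ into a group algebra, and then to propagate the antipode to all homological degrees by a recursion that is solvable precisely because every degree zero set-like element has become invertible. First I would analyze degree zero. Since $X$ is reduced, $C_*(X)$ is a connected dg coalgebra and $\mathbf{\Omega}(C_*(X))_0$ is the tensor algebra on $s^{-1}\overline{C}_1(X)$; under the coproduct $\nabla$ each non-degenerate $1$-simplex is set-like, and products of set-like elements are set-like, so $\mathbf{\Omega}(C_*(X))_0$ is the monoid algebra $\mathbf{k}[M]$ of the free monoid $M=\mathcal{Z}(C_*(X))$ on the non-degenerate $1$-simplices. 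Formally inverting $\mathcal{Z}(C_*(X))$ group-completes $M$, so $\widehat{\mathbf{\Omega}}(C_*(X))_0\cong\mathbf{k}[F]$ is the group algebra of the free group $F$ on the non-degenerate $1$-simplices. This is exactly what the localization accomplishes, and it is the reason the non-localized bialgebra $\mathbf{\Omega}(C_*(X))$ is \emph{not} a Hopf algebra: there the set-like generators are not invertible. In particular $\widehat{\mathbf{\Omega}}(C_*(X))_0$ is a Hopf algebra with antipode $s_0$ given on $F$ by $g\mapsto g^{-1}$, and the projection $\pi_0\colon \widehat{\mathbf{\Omega}}(C_*(X))\to \widehat{\mathbf{\Omega}}(C_*(X))_0$ killing positive degrees is a map of bialgebras.

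Writing $H=\widehat{\mathbf{\Omega}}(C_*(X))$, I would then build the antipode by induction on homological degree, using only that $H$ is a non-negatively graded bialgebra whose degree zero part $H_0=\mathbf{k}[F]$ is a Hopf algebra. Set $s|_{H_0}=s_0$. The right $H_0$-comodule structure $(\mathrm{id}\otimes\pi_0)\circ\nabla$ endows each $H_n$ with an $F$-grading $H_n=\bigoplus_{g\in F}(H_n)^g$, where $\nabla(x)$ has $H_n\otimes H_0$-component equal to $x\otimes g$ for homogeneous $x\in(H_n)^g$. For such an $x$ with $n\geq 1$, the left antipode identity $\mu\circ(s\otimes\mathrm{id})\circ\nabla(x)=0$ isolates the unique term in which $s$ is applied in degree $n$, namely $s(x)\cdot g$, from a remainder $W(x)$ assembled from the components $\nabla_{i,n-i}(x)$ with $i<n$; the remainder involves $s$ only in degrees strictly below $n$ together with $s_0$, hence is known by induction. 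One then defines $s(x):=-W(x)\cdot g^{-1}$, which is legitimate precisely because $g$ is invertible in $H_0$. Running the mirror-image recursion with the left $H_0$-comodule structure and left multiplication by $g^{-1}$ yields a right convolution inverse of $\mathrm{id}_H$; since a left and a right inverse in the convolution monoid $(\mathrm{Hom}_{\mathbf{k}}(H,H),*)$ necessarily agree, the two constructions coincide in a single two-sided antipode $s$.

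Finally I would record the two compatibilities needed for a \emph{dg} Hopf algebra. The antipode constructed above preserves homological degree, and since $\mathrm{id}_H$ is a cycle in the dg convolution algebra $\mathrm{Hom}_{\mathbf{k}}(H,H)$, its convolution inverse is automatically a cycle as well; equivalently $D\circ s=s\circ D$, so $s$ is a map of dg $\mathbf{k}$-modules and $(\widehat{\mathbf{\Omega}}(C_*(X)), D, \otimes, \nabla, s)$ is a dg Hopf algebra. I expect the main obstacle to be the bookkeeping in the inductive step, namely verifying that the remainder $W(x)$ genuinely involves $s$ only in strictly lower degrees and that the resulting $s$, defined through division by the $F$-grading, is simultaneously an algebra anti-homomorphism and a coalgebra anti-homomorphism as well as two-sided. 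This is exactly the place where the group algebra structure of $H_0$, and hence the localization $\mathcal{Z}(C_*(X))^{-1}$, is indispensable; the surrounding verifications are routine Koszul-sign computations.
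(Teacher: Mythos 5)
Your proof is correct, but it takes a genuinely different route from the paper's. The paper shares your first step exactly: after localization, the degree zero part of $\widehat{\mathbf{\Omega}}(C_*(X))$ is the group algebra of the free group on the $1$-simplices, hence a Hopf algebra with antipode $s_0$. But from there the paper proceeds by importing topology: it uses the Hess--Tonks dg algebra maps $\phi\colon\widehat{\mathbf{\Omega}}(C_*(X))\to C_*(G(X))$ and $\psi\colon C_*(G(X))\to\widehat{\mathbf{\Omega}}(C_*(X))$, where $G(X)$ is Kan's loop group and $C_*(G(X))$ is a dg Hopf algebra whose antipode $s_{G(X)}$ is induced by inversion in the simplicial group, to manufacture a \emph{chain map} $g=\psi\circ s_{G(X)}\circ\phi$ extending $s_0$; it then runs the correction procedure from the proof of Takeuchi's Lemma 14 to convert $g$ into an honest convolution inverse of the identity. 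The whole point of that detour, as the paper's subsequent remark explains, is that Takeuchi's argument needs a linear extension of $s_0$ that respects the differential. Your argument replaces both external inputs by elementary algebra: since $H_0\cong\mathbf{k}[F]$ is a group algebra, the comodule structures $(\mathrm{id}\otimes\pi_0)\circ\nabla$ and $(\pi_0\otimes\mathrm{id})\circ\nabla$ (here $\pi_0$ is a coalgebra map by non-negativity of the grading, and comodules over $\mathbf{k}[F]$ are $F$-graded modules over any commutative ring because $\mathbf{k}[F]$ is $\mathbf{k}$-free) give $F$-gradings along which the two antipode identities can be solved degree by degree, dividing by the invertible elements $g\in F$, and the left and right solutions agree by uniqueness of inverses in the convolution monoid. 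Your closing observation -- that $\mathrm{id}_H$ is a degree-zero cycle in the dg convolution algebra, so its necessarily unique convolution inverse is automatically a cycle, hence a chain map -- disposes of the dg-compatibility issue that motivated the paper's detour: once \emph{any} antipode exists on the underlying graded bialgebra, it is compatible with $D$ for free. What each approach buys: yours is self-contained, works over an arbitrary commutative ring without citing Hess--Tonks or Takeuchi, and exploits precisely the feature that the localization creates (a group algebra in degree zero); the paper's approach, though heavier, ties the antipode to the geometrically meaningful inverse map on Kan's loop group, and its Takeuchi-style correction would still function in situations where the degree-zero bialgebra is a Hopf algebra that is not a group algebra, where your grading trick is unavailable.
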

\begin{proof}
We must show the existence of an antipode map 
\[s \colon \widehat{\mathbf{\Omega}}(C_*(X)) \to \widehat{\mathbf{\Omega}}(C_*(X)) \] that is also a map of dg $\mathbf{k}$-modules. First note that the $\mathbf{k}$-bialgebra $\widehat{\mathbf{\Omega}}(C_*(X))_0$ on degree $0$ is a Hopf algebra being isomorphic to the group algebra $\mathbf{k}[G(X_1)]$, where $G(X_1)$ denotes the free group generated by the set of $1$-simplices. The antipode \[ s_0 \colon \widehat{\mathbf{\Omega}}(C_*(X))_0 \to \widehat{\mathbf{\Omega}}(C_*(X))_0 \] is determined by sending a group-like element to its inverse. Recall that $\widehat{\mathbf{\Omega}}(C_*(X))$ if and only if the identity map $\text{id} \colon \widehat{\mathbf{\Omega}}(C_*(X)) \to \widehat{\mathbf{\Omega}}(C_*(X))$ is invertible as an element in the convolution algebra $(\text{Hom}(\widehat{\mathbf{\Omega}}(C_*(X)),\widehat{\mathbf{\Omega}}(C_*(X))), \star)$. 
So far we know that the restriction \[\text{id}|_{\widehat{\mathbf{\Omega}}(C_*(X))_0} \colon \widehat{\mathbf{\Omega}}(C_*(X))_0 \to \widehat{\mathbf{\Omega}}(C_*(X))\] is an invertible element in the convolution algebra. We now explain why $\text{id}$ is invertible following a classical argument of Takeuchi adapted to the dg setting. First recall that Hess and Tonks constructed in \cite{hess2010loop} a chain homotopy equivalence between $\widehat{\mathbf{\Omega}}(C_*(X))$ and $G(X)$, the simplicial group functorially associated to $X$ known as Kan's loop group construction. In particular, this involved constructing two maps
of dg algebras \[ \phi \colon \widehat{\mathbf{\Omega}}(C_*(X)) \to C_*(G(X)) \]
and
\[\psi \colon C_*(G(X)) \to \widehat{\mathbf{\Omega}}(C_*(X))\]
that are chain homotopy inverses to each other, see Theorem 15 in \cite{hess2010loop}. Since $G(X)$ is a simplicial \textit{group}, the chain complex $C_*(G(X))$ may be equipped with a natural dg Hopf algebra structure with product induced by that of $G(X)$ (together with the Eilenberg-Zilber map) and Alexander-Whitney coproduct. The antipode \[s_{G(X)} \colon C_*(G(X)) \to C_*(G(X))\] is induced by applying the normalized chains functor to the inverse map \[(-)^{-1} \colon G(X) \to G(X).\] Furthermore, the map $g \colon \widehat{\mathbf{\Omega}}(C_*(X)) \to \widehat{\mathbf{\Omega}}(C_*(X))$ defined by $g:=\psi \circ s_{G(X)} \circ \phi$ is a map of dg $\mathbf{k}$-modules extending $s_0$. This map might not be the inverse of $\text{id}$ in the convolution algebra but the proof of Lemma 14 in \cite{Takeuchi} explains how one may obtain such an inverse from $g$.
\end{proof}
\begin{remark} Lemma 14 in \cite{Takeuchi} implies that a graded bialgebra $B$, over a field, is a Hopf algebra if and only if the degree $0$ bialgebra $B_0$ is a Hopf algebra. However, we cannot apply this result right away to our context because the proof uses the existence of an arbitrary linear map extending the antipode in degree $0$, a fact we do not immediately have in the dg setting (even over a field) since the arbitrary extension might not preserve differentials. 
\end{remark}
\subsection{coHochschild complex and Brown's twisted tensor product}
Recall the following classical construction of Ed Brown.

\begin{definition} Let $(C, d_C, \Delta_C)$ be a dg coalgebra, $(A, d_A, \mu_A)$ an dg algebra, and $(M,d_M)$ a left dg $A$-module with action $\mu_M: A \otimes M \to M$. Suppose $\tau: C \to A$ is a twisting cochain, that is, a map of degree $-1$ satisfying \[d_A \circ \tau + \tau \circ d_C + \mu_A\circ (\tau \otimes \tau) \circ \Delta_C=0.\]
The \textit{twisted tensor product of $C$ and $M$ over $\tau$} is defined to be the chain complex $C \otimes_{\tau} M := (C \otimes M, d_{\tau})$ where \[ d_{\tau}=d_C\otimes \text{id}_M + \text{id}_C \otimes d_M + (\text{id}_C \otimes \mu_M) \circ (\text{id}_C\otimes \tau \otimes \text{id}_M) \circ (\Delta_C \otimes \text{id}_M)\]
\end{definition}
Brown proved that the above construction gives rise to a chain model for the total space $E$ of a fibration $F \to E \to B$ using a canonical twisting cochain $\tau \colon C_*(B) \to C_{*-1}(\Omega B)$ (following a geometric construction of F. Adams) and the holonomy chain map $\mu_{F} \colon C_*(\Omega B) \otimes C_*(F) \to C_*(F).$

We now compare the extended coHochschild complex of the chains on a reduced simplicial set to a twisted tensor product constructed via the adjoint action on the dg Hopf algebra structure of the cobar construction. The adjoint action is an algebraic model for the conjugation action of based loops and consequently encodes the holonomy of the free loop fibration $\Omega B \to L B \to B$. 

For any connected dg coalgebra $C$ denote by \[ \iota \colon C \to \mathbf{\Omega}(C)\] the twisting cochain given by \[\iota(x) = \{ x \}.\] The twisting cochain $\iota$ is called the \textit{universal twisting cochain of $C$}. 

Given any reduced simplicial set $X$, let $i \colon \mathbf{\Omega}(C_*(X)) \hookrightarrow \widehat{\mathbf{\Omega}}(C_*(X))$ the canonical inclusion map, and $i^*_{ad} \widehat{\mathbf{\Omega}}(C_*(X))$ the left dg $\mathbf{\Omega}(C_*(X))$-module determined by the action action, as defined in Section \ref{Hochschildadjoint} using the dg Hopf algebra structure of Theorem \ref{cobarhopf}.

\begin{theorem} \label{browncohoch} For any reduced simplicial set $X$ there is a natural chain homotopy equivalence of dg $\mathbf{k}$-modules
\[ C_*(X) \otimes_{\iota} i^*_{ad} \widehat{\mathbf{\Omega}}(C_*(X)) \leftrightarrows \widehat{\mathcal{coCH}}_*(C_*(X)).\]
\end{theorem}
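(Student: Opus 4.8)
The plan is to connect both complexes in the statement to a single intermediate object, namely the Hochschild complex of the cobar algebra $B := \mathbf{\Omega}(C_*(X))$ with coefficients in the extended cobar algebra $A := \widehat{\mathbf{\Omega}}(C_*(X))$, viewed as a $B$-bimodule via the canonical inclusion $i \colon B \to A$. Since $X$ is reduced, $C := C_*(X)$ is a connected dg coalgebra, so $C_0 = \mathbf{k}$ and both $B$ and $A$ are genuine dg $\mathbf{k}$-algebras (one-object dg categories); moreover $A$ is a dg Hopf algebra by Theorem \ref{cobarhopf} and $i$ is a map of dg bialgebras. Writing $\mathcal{CH}_*(B, i^{*}A)$ for the Hochschild complex of $B$ with bimodule coefficients $i^{*}A$ in the sense of Section \ref{Hochschildadjoint}, I would exhibit natural chain homotopy equivalences from each of the two complexes in the statement to $\mathcal{CH}_*(B, i^{*}A)$, and then compose.

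For the left-hand side, the first observation is that the twisted tensor product in the statement is literally the complex $\mathcal{Q}(\mathbf{k}, C, i^{*}_{ad}A)$ of Proposition \ref{contraction} with $\mathsf{M} = \mathbf{k}$ (the trivial right $B$-module via the augmentation) and $\mathsf{N} = i^{*}_{ad}A$: in the connected case $\partial_{\mathcal{Q}}$ squares to zero, the left summand of its term $\theta'$ vanishes because a monomial $\{c'\}$ of positive length acts by zero on $\mathbf{k}$, and the surviving summand $\sum \pm c' \otimes (\{c''\}\cdot n)$ is exactly the twist by the universal twisting cochain $\iota(c) = \{c\}$ followed by the adjoint $B$-action on $A$. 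Proposition \ref{contraction} then supplies a natural chain homotopy equivalence
\[ C_*(X) \otimes_{\iota} i^{*}_{ad}A \;\simeq\; \text{Bar}(\mathbf{k}, B, i^{*}_{ad}A) \;=\; \mathcal{C}_*(B, i^{*}_{ad}A). \]
Applying Proposition \ref{HochschildHopfalgebra} with $f = i$ (valid since $A$ is a dg Hopf algebra) converts the adjoint coefficients into bimodule coefficients, giving a natural isomorphism $\mathcal{C}_*(B, i^{*}_{ad}A) \cong \mathcal{CH}_*(B, i^{*}A)$. The adjoint module in the statement is, by definition, the module $f^{*}_{ad}A$ produced by that proposition, so the two match on the nose.

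For the right-hand side, the second chain contraction of Theorem \ref{hoch-cohoch}, specialized to the one-object case (so that $\mathcal{M}(\widehat{\mathbf{\Omega}}(C)) = A$ and $\mathcal{M}(\mathbf{\Omega}(C)) = B$), yields a natural chain homotopy equivalence
\[ \widehat{\mathcal{coCH}}_*(C_*(X)) \;\simeq\; \text{Bar}(A, B, A) \underset{A \otimes A^{op}}{\bigotimes} A. \]
After normalizing, both $\text{Bar}(A, B, A)\otimes_{A\otimes A^{op}} A$ and $\text{Bar}(B, B, B)\otimes_{B\otimes B^{op}} A$ collapse, via the standard cyclic identification $(A \otimes V \otimes A)\otimes_{A\otimes A^{op}} A \cong V \otimes A$, to the same graded module $\bigoplus_{p\ge 0}(s^{+1}\overline{B})^{\otimes p}\otimes A$ carrying the cyclic Hochschild differential of $B$ with coefficients in the bimodule $i^{*}A$; that is, both are canonically isomorphic to $\mathcal{CH}_*(B, i^{*}A)$. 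Composing the equivalences of the two previous paragraphs then gives the desired natural chain homotopy equivalence, with naturality inherited since each of Proposition \ref{contraction}, Proposition \ref{HochschildHopfalgebra}, and Theorem \ref{hoch-cohoch} is natural in $X$.

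The routine verifications—tracking Koszul signs through $\theta'$, and checking that the left and right $B$-module structures on $A$ used in $\text{Bar}(A, B, A)$ agree with those used in the cyclic collapse—are mechanical. The main obstacle I expect is the bookkeeping in the third paragraph: one must keep $B = \mathbf{\Omega}(C)$ and $A = \widehat{\mathbf{\Omega}}(C)$ carefully distinct and confirm that the bimodule $i^{*}A$ arising from Theorem \ref{hoch-cohoch} (where $A$ sits on both sides of the bar construction and is then cyclically merged with the coefficient) is the \emph{same} $B$-bimodule as the one produced from Proposition \ref{HochschildHopfalgebra}. It is worth emphasizing that this argument lands on $\mathcal{CH}_*(B, i^{*}A)$ rather than on the Hochschild complex of $A$ with coefficients in itself, so it does not require the localization quasi-isomorphism $\text{Bar}(A, B, A) \to \text{Bar}(A, A, A)$ used elsewhere.
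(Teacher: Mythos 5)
Your proposal is correct and takes essentially the same route as the paper: both arguments pass through the same intermediate object, the Hochschild complex of $\mathbf{\Omega}(C_*(X))$ with coefficients in the bimodule $i^*\widehat{\mathbf{\Omega}}(C_*(X))$, using Proposition \ref{contraction} to relate the twisted tensor product to $\text{Bar}(\mathbf{k}, \mathbf{\Omega}(C), -)$ and Proposition \ref{HochschildHopfalgebra} (via the Hopf structure of Theorem \ref{cobarhopf}) to trade adjoint coefficients for bimodule coefficients. The only difference is bookkeeping on the coHochschild side, where you invoke the second contraction of Theorem \ref{hoch-cohoch} followed by a cyclic collapse, whereas the paper re-applies Proposition \ref{contraction} with $\mathsf{M}=\mathsf{N}=\mathcal{M}(\mathbf{\Omega}(C))$ and then tensors with $i^*\widehat{\mathbf{\Omega}}(C)$ over $\mathbf{\Omega}(C)\otimes\mathbf{\Omega}(C)^{op}$ — the same contraction applied in a different configuration.
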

\begin{proof}
For simplicity, we denote the connected dg coalgebra $C_*(X)$ by $C$, the augmented dg algebra $\mathbf{\Omega}(C_*(X))$ by $A$, and the dg Hopf algebra $\widehat{\mathbf{\Omega}}(C_*(X))$ by $\widehat{A}$. Let $i \colon A \to \widehat{A}$ be the natural inclusion map. As in section \ref{Hochschildadjoint}, we denote by $i_{ad}^*\widehat{A}$ to be $\widehat{A}$ equipped with the right dg $A$-module structure given by the adjoint action and $i^*(A)$ when equipped with the left $(A \otimes A^{op})$-action.

Note there is a natural isomorphism of dg $\mathbf{k}$-modules
\begin{equation} \label{equiv1}
C \otimes_{\iota} i^*_{ad} \widehat{A} \cong \mathcal{Q}(\mathbf{k}, C, A) \otimes_{A} i^*_{ad} \widehat{A},
\end{equation}
where $\mathbf{k}$ is thought of as a right dg $A$-module via the augmentation map.

By Proposition \ref{contraction}, there is a natural chain homotopy equivalence
\begin{equation} \label{equiv2}
\mathcal{Q}(\mathbf{k}, C, A) \otimes_{A} i^*_{ad} \widehat{A} \leftrightarrows \text{Bar}(\mathbf{k}, A, A) \otimes_{A} i^*_{ad} \widehat{A} \cong \mathcal{C}_*(A, i^*_{ad}\widehat{A}).
\end{equation}
By Proposition \ref{HochschildHopfalgebra}, there is a natural isomorphism of dg $\mathbf{k}$-modules
\begin{equation} \label{equiv3}
 \mathcal{C}_*(A, i^*_{ad}\widehat{A}) \cong \mathcal{CH}_*(A, i^*\widehat{A})= \text{Bar}(A,A,A) \otimes_{A \otimes A^{op}} i^*\widehat{A}
\end{equation}
Again by Proposition \ref{contraction}, there is a natural chain homotopy equivalence
\begin{equation} \label{equiv4}
\text{Bar}(A,A,A) \otimes_{A \otimes A^{op}} i^*\widehat{A} \leftrightarrows \mathcal{Q}(A,C,A)\otimes_{A \otimes A^{op}} i^*\widehat{A}.
\end{equation}
Now observe there is a natural isomorphism of dg $\mathbf{k}$-modules
\begin{equation} \label{equiv5}
\mathcal{Q}(A,C,A)\otimes_{A \otimes A^{op}} i^*\widehat{A} \cong \widehat{\mathcal{coCH}}_*(C).
\end{equation}
Putting together \ref{equiv1}, \ref{equiv2}, \ref{equiv3}, \ref{equiv4}, and \ref{equiv5} yields the desired result. 
\end{proof}
\begin{remark} Note that the $\mathbf{k}$-linear map $C_*(X) \to \widehat{\mathcal{coCH}}_*(C_*(X))$ given by sending $\sigma \mapsto \sigma \otimes 1_{\mathbf{k}}$ is \textit{not} in general a chain map (it would be if $C_*(X)$ was strictly cocommutative, which is not). However, we do have a chain map \[C_*(X) \to C_*(X) \otimes_{\iota} i^*_{ad} \widehat{\mathbf{\Omega}}(C_*(X))\]
given by the same formula \[\sigma \mapsto \sigma \otimes 1_{\mathbf{k}}.\]

Composing this map with the chain map 
\[C_*(X) \otimes_{\iota} i^*_{ad} \widehat{\mathbf{\Omega}}(C_*(X)) \to \widehat{\mathcal{coCH}}_*(C_*(X))\] given in Theorem \ref{browncohoch}, we obtain a chain map
\[C_*(X) \to \widehat{\mathcal{coCH}}_*(C_*(X))\]
modeling the continuous map $X \to LX$ that sends a point $b$ in $X$ to the constant loop at $b$. We expect this map to be useful in studying the Goresky-Hingston coproduct and the corresponding Lie cobracket in the $S^1$-equivariant setting in the string topology of non-simply connected manifolds. 
\end{remark}
\printbibliography

\end{document}